\documentclass[11pt]{amsart}
   \usepackage{ amsmath,amssymb,verbatim,graphicx,amsthm}
\usepackage[top=1in, bottom=1in, left=1in, right=1in]{geometry}
\usepackage{cite}
\raggedbottom
 \usepackage{url}

 \allowdisplaybreaks[1]

\usepackage[sc]{mathpazo}
\linespread{1.5}         
\usepackage[T1]{fontenc}

\newcommand{\vertiii}[1]{{\left\vert\kern-0.25ex\left\vert\kern-0.25ex\left\vert #1
    \right\vert\kern-0.25ex\right\vert\kern-0.25ex\right\vert}}

\theoremstyle{plain}
\newtheorem{theorem}{Theorem}[section]

\newtheorem{lemma}[theorem]{Lemma}

\newtheorem{mydef}[theorem]{Definition}

   \renewcommand{\bf}{\bfseries}

   \newcommand{\T}{\mathbb{T}}
   \newcommand{\Z}{\mathbb{Z}}
   \newcommand{\N}{\mathbb{N}}

   \renewcommand{\epsilon}{\varepsilon}

   \newcommand{\dis}{\displaystyle}

\theoremstyle{plain}

\usepackage{setspace}

 \begin{document}
 \title{Pointwise recurrence for commuting measure preserving transformations}
   \author{I. Assani}
\thanks{Department of Mathematics, UNC Chapel Hill, NC 27599, assani@email.unc.edu}
 \begin{abstract}
 Let $(X,\mathcal{A}, \mu)$ be a probability measure space and let $T_i,$ $1\leq i\leq H,$ be commuting invertible measure preserving transformations on this measure space. We prove the following pointwise results; \\
 The averages
 $$\frac{1}{N}\sum_{n=1}^N f_1(T_1^nx)f_2(T_2^nx)\cdots f_H(T_H^nx)$$ converge a.e. for every function $f_i \in L^{\infty}(\mu)$ .\\
As a consequence  if $T_i = T^i$ for $1\leq i \leq H$ where $T$ is an invertible measure preserving transformation on $(X, \mathcal{A}, \mu)$ then the averages
  $$\frac{1}{N}\sum_{n=1}^N f_1(T^nx)f_2(T^{2n}x)...f_H(T^{Hn}x)$$ converge a.e.
  This solves a long open question on the pointwise convergence of nonconventional ergodic averages. For $H=2$ it provides another proof of J. Bourgain's a.e. double recurrence theorem.
\end{abstract} \maketitle

  \section{Introduction}
Let $(X, \mathcal{A}, \mu)$ be a probability measure space that we assume without loss of generality to be atomless.
Let $T_i,$ $1\leq i\leq H,$ be invertible measure preserving transformations on this measure space. For $f_i\in L^{\infty}(\mu),$ $1\leq i \leq H,$ we look at
  the well known open problem of the pointwise convergence of the nonconventional ergodic averages $$\frac{1}{N}\sum_{n=1}^N \prod_{i=1}^H f_i(T_i^nx).$$ The case $H=1$ corresponds to the classical ergodic averages for which the pointwise convergence is known by Birkhoff ergodic theorem. In \cite{Fu1}, H. Furstenberg asked if for a measure preserving transformation $T$ on $(X, \mathcal{A}, \mu),$ bounded functions $f,g$ and $m$ a positive integer $m\neq 1$ the averages $\dis \frac{1}{N}\sum_{n=1}^N f(T^nx)g(T^{mn}x)$ converge a.e. J. Bourgain \cite{Bourg} proved that this was indeed the case. The natural question then became; for any positive integer $H,$ and bounded functions $f_1, f_2,...,f_H$ do we have the pointwise convergence of the averages $\dis \frac{1}{N}\sum_{n=1}^N f_1(T^nx)\cdots f_H(T^{Hn}x)?$ Partial results were obtained in \cite{DL} for K-systems and in \cite{Ass1} for weakly mixing systems $T$ for which  the restriction to the Pinsker algebra has singular spectrum. The arguments in this last paper relied in part on J. Bourgain result \cite{Bourg}. We provided a simplification of Bourgain's proof for a class of ergodic dynamical systems in \cite{Ass3} and gave some consequences of this simplification in \cite{Ass2}.  In \cite{HSY} W. Huang, S. Shao and X. Ye announced a positive solution when $T$ is distal. \\
 For the more general case of commuting measure preserving transformations $T_i$ the results are even more scarce. In \cite{Ass2} , for $H=2$, derived from the ideas in \cite{Ass3}, a class of commuting measure transformations strictly containing K- actions ( for the group generated by $T_1$ and $T_2$) was shown to provide pointwise convergent averages. In \cite{Leib} Leibman showed that once restricted to actions on nilsystems the averages converge.  An approach using random sequences is done by N. Frantzikinakis, E. Lesigne and M. Wierdl \cite{FLW}.\\
    For the norm convergence the situation is pretty much settled. In their initial work, J.P. Conze and E. Lesigne \cite{CL} proved the norm convergence of the averages $\dis \frac{1}{N}\sum_{n=1}^N f\circ T_1^n f_2\circ T_2^n$ for commuting measure preserving transformations $T_1$ and $T_2$ on the same probability measure space. In \cite{HK}, B. Host and B. Kra  and independently T. Ziegler \cite{Z}, proved the norm convergence of the averages $\dis \frac{1}{N}\sum_{n=1}^Nf_1\circ T^n f_2\circ T^{2n}\cdots f_H\circ T^{Hn}.$
    In \cite{Tao}, T. Tao extended their result by proving that for commuting measure preserving transformations $T_i,$ $1\leq i\leq H$ on the same probability measure space the averages $\dis \frac{1}{N}\sum_{n=1}^N f_1\circ T_1^nf_2\circ T_2^n\cdots f_H\circ T_H^n$ converge in norm for every bounded function $f_i$ $1\leq i\leq H.$  Another proof was given by T. Austin in \cite{Austin}, B. Host \cite{Host} and 
 H. Townser \cite{Townser}. M. Walsh \cite{Walsh} extended Tao's result to the case where the maps $T_i,$ $1\leq i\leq H$ generate a nilpotent group. In view of the negative result provided by V. Bergelson and A. Leibman  \cite{BL} for solvable groups this is the best possible case for convergence in norm. T.  Austin gave a proof of Walsh `s result using couplings in \cite{Austin2}.

     Our goal is to present a new approach to the pointwise convergence of these non conventional ergodic averages. We do not use the notion of characteristic factors or uniform Wiener Wintner estimates as done in Bourgain `s paper. Our approach will enable us to prove the following results.
 \begin{theorem}\label{MET1} Let $(X, \mathcal{A}, \mu)$ be a probability measure space and let $H$ be a positive integer. Let $T_i,$ $1\leq i\leq H$ be $H$ commuting transformations on $(X, \mathcal{A}, \mu)$ generating a free action. For every bounded functions $f_i,$ $1\leq i\leq H$ the averages $\dis \frac{1}{N}\sum_{n=1}^N \prod_{i=1}^H f_i(T_i^nx)$ converge a.e.
\end{theorem}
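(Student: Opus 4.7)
The plan is to derive pointwise convergence from the combination of two ingredients: first, the already known norm convergence (Tao's theorem, cited in the introduction), and second, a maximal inequality of the form
$$\Big\| \sup_{N} \Big| \frac{1}{N}\sum_{n=1}^N \prod_{i=1}^H f_i(T_i^n x) \Big| \Big\|_{p} \;\leq\; C_{H,p} \prod_{i=1}^H \|f_i\|_\infty$$
for some $p<\infty$. Together with a.e.\ convergence on a suitably dense subclass of $L^\infty$, the Banach principle upgrades norm convergence to pointwise convergence. Since the action is assumed free, one can use Rokhlin's lemma for $\Z^H$ actions (together with a Calder\'on-style transference) to reduce the analysis to a combinatorial problem on $\Z^H$, where each $T_i$ becomes translation by a basis vector $e_i$ on a Rokhlin tower of controlled size.

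First I would set up the transference and fix an appropriate index $p$ (with $p$ possibly depending on $H$). Then I would attempt the maximal inequality by induction on $H$. The base case $H=1$ is the Hardy--Littlewood--Wiener maximal inequality. For the inductive step, one can try a van der Corput-type estimate: for each fixed $n$, replace $\prod_i f_i(T_i^n x)$ by a telescoped quantity built from shifts by a smaller scale $h$, and absorb the resulting cross-terms into products of averages of strictly fewer factors (with shifted functions). Controlling these requires simultaneously controlling a family of maximal functions indexed by auxiliary shifts; this is where the induction must be carefully set up so that the estimate for $H-1$ functions, applied uniformly in the extra shift parameter, can be integrated back to yield the bound for $H$ functions.

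For the dense subclass on which pointwise convergence is verified directly, I would work with functions whose spectral/structural behavior makes the averages essentially "separate'' into manageable pieces---for instance, functions in a joint eigenspace of the $T_i$, or more generally functions supported on a finite-rank subsystem where $L^2$ (hence $L^\infty$) convergence and a.e.\ convergence coincide. The reduction of general bounded functions to this dense class will then be executed via the maximal inequality above.

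The main obstacle is clearly the maximal inequality. Bypassing characteristic factors and uniform Wiener--Wintner estimates (as the author emphasizes) removes the standard tools for controlling the supremum, and the van der Corput step loses symmetry between the $T_i$, so the inductive hypothesis must be stated in a sufficiently uniform/parametrized form to survive it. A secondary difficulty is that the "free action'' hypothesis is needed precisely to make the $\Z^H$ transference work cleanly; any step that degenerates when the $T_i$ satisfy accidental relations will have to be handled by a separate argument or by passing to an extension in which the action is free.
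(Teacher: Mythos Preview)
Your proposed maximal inequality
\[
\Big\| \sup_{N} \Big| \frac{1}{N}\sum_{n=1}^N \prod_{i=1}^H f_i(T_i^n x) \Big| \Big\|_{p} \;\leq\; C_{H,p} \prod_{i=1}^H \|f_i\|_\infty
\]
is trivially true with $C_{H,p}=1$: each summand is pointwise bounded by $\prod_i\|f_i\|_\infty$, hence so is every average, hence so is the supremum. For the Banach principle to have any force, the norms on the right must be ones in which your ``dense subclass'' is actually dense. With $L^\infty$ on the right you would need $L^\infty$-density, and joint eigenfunctions or functions living on a finite-rank factor are \emph{not} dense in $L^\infty(\mu)$ for a general system. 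So the scheme, as written, yields nothing.

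If instead you mean a genuine maximal inequality --- say with $\|f_j\|_2$ in place of $\|f_j\|_\infty$ for at least one index $j$ --- then this is exactly the long-standing obstruction to the problem, not a tool you can invoke. No such inequality is known even for $H=2$ (Bourgain's argument does not produce one), and for $H\geq 3$ it is completely open. The van der Corput step you outline controls the $L^2$ norm of a \emph{fixed} average $M_N$; it does not control $\sup_N M_N$, because after averaging in the auxiliary shift $h$ the supremum over $N$ sits inside an expression that is no longer multilinear in the original $f_i$, and an inductive hypothesis for $H-1$ factors gives nothing uniform in $h$ at the level of the maximal operator. Transference via Rokhlin towers is fine, but it only moves the same unproved inequality to $\Z^H$.

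The paper's route is entirely different and bypasses maximal inequalities. It passes to a topological model (Weiss), lifts the problem to $(X^H,\nu)$ for an explicit $\Phi$-quasi-invariant measure $\nu$ built from the diagonal measure, disintegrates $\nu$ into ergodic pieces $\nu_z$, and then argues by contradiction on each piece: if the averages oscillated, one manufactures an open set $O$ (with $\omega_z$-null boundary) on which the oscillation persists at every scale, and a discrete $\ell^2$ \emph{variational} inequality for scalar sequences forces $L\,\theta^2\,\omega_z(O)\le C^2$ for all $L$. The Furstenberg--Katznelson multiple recurrence theorem is what guarantees $\omega_z(O)>0$ whenever $\nu_z(O)>0$, and that is where commutativity and freeness are actually used.
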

 As a corollary we have the following result
\begin{theorem}\label{MET2}
Let $(X, \mathcal{A}, \mu)$ be a probability measure space and let $H$ be a positive integer. Let $T$ be an invertible measure preserving transformation acting on $(X, \mathcal{A}, \mu)$ and $f_i$, $1\leq i\leq H$ be $H$ functions in $L^{\infty}(\mu).$
Then the averages $\dis \frac{1}{N}\sum_{n=1}^N \prod_{i=1}^H f_i(T^{in}x)$ converge a.e.
\end{theorem}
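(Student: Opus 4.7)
The plan is to deduce Theorem \ref{MET2} from Theorem \ref{MET1} by embedding the $H$ iterates $T, T^2, \ldots, T^H$ into a genuinely free $\Z^H$-action on an extension of $(X, \mathcal{A}, \mu)$. Naively setting $T_i := T^i$ produces $H$ commuting maps, but these generate only the cyclic group $\langle T \rangle$ and so do not give a free $\Z^H$-action (for instance, $T_1^2$ and $T_2$ coincide), and Theorem \ref{MET1} cannot be invoked directly. A product extension remedies this.

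First, fix an auxiliary probability space $(Y, \mathcal{B}, \nu)$ carrying $H$ commuting invertible measure preserving transformations $S_1, \ldots, S_H$ that generate a free $\Z^H$-action; a convenient choice is $Y = \{0,1\}^{\Z^H}$ with its Bernoulli product measure and $S_i$ the coordinate shift in the $i$-th direction. On $(X \times Y, \mu \times \nu)$ set $\tilde T_i := T^i \times S_i$. These are commuting invertible measure preserving transformations, and an arbitrary word in them evaluates as
$$\tilde T_1^{n_1} \cdots \tilde T_H^{n_H}(x,y) = \bigl( T^{n_1 + 2n_2 + \cdots + H n_H} x,\ S_1^{n_1}\cdots S_H^{n_H} y \bigr);$$
the freeness of the $S_i$-action forces $n_1 = \cdots = n_H = 0$ whenever this element acts trivially on $(X \times Y, \mu \times \nu)$. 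Hence $(\tilde T_1, \ldots, \tilde T_H)$ generate a free action and Theorem \ref{MET1} applies to this extended system.

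Setting $F_i(x, y) := f_i(x) \in L^{\infty}(\mu \times \nu)$, Theorem \ref{MET1} yields that
$$\frac{1}{N}\sum_{n=1}^N \prod_{i=1}^H F_i\bigl(\tilde T_i^n (x,y)\bigr) \ = \ \frac{1}{N}\sum_{n=1}^N \prod_{i=1}^H f_i(T^{in} x)$$
converges for $(\mu \times \nu)$-almost every $(x,y)$. Because these averages depend only on $x$, the exceptional set has the form $E \times Y$; Fubini gives $\mu(E) \cdot \nu(Y) = 0$ and hence $\mu(E) = 0$, which is precisely the a.e. convergence asserted by Theorem \ref{MET2}. The reduction is entirely formal: the only substantive point is the freeness of the extended $\Z^H$-action, which is immediate from the Bernoulli choice of $(Y, \nu, S_1, \ldots, S_H)$. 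There is no real obstacle here, since all of the analytic difficulty has been absorbed into Theorem \ref{MET1}.
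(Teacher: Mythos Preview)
Your argument is correct, and it takes a genuinely different route from the paper's own deduction.

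The paper does not pass through Theorem \ref{MET1} at all. Instead it reduces to the case where $T$ is ergodic (hence aperiodic), invokes the Weiss strictly ergodic model, and applies Theorem \ref{MET3} directly to the maps $T_i = T^i$. As you correctly observe, these maps do \emph{not} generate a free $\Z^H$-action, so strictly speaking the hypotheses of Theorem \ref{MET3} as stated are not met; the paper is implicitly relying on the fact that its proof of Theorem \ref{MET3} only ever uses the weaker condition that each $T_iT_j^{-1}$ is aperiodic for $i\neq j$ (this is what makes the sets $\Phi^n(\Delta)$ pairwise disjoint in Lemma \ref{L2}), and that condition \emph{is} satisfied once $T$ is aperiodic.

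Your product-extension trick sidesteps this subtlety entirely: by tensoring with a Bernoulli $\Z^H$-shift you manufacture a bona fide free $\Z^H$-action, apply Theorem \ref{MET1} as a black box, and then descend by Fubini. The cost is an auxiliary space; the gain is that you never have to look inside the proof of Theorem \ref{MET3} to see which part of the freeness hypothesis is actually used, and you avoid the separate ergodic-decomposition step. Both reductions are short and formal; yours is arguably the cleaner packaging.
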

   Theorem \ref{MET2} solves a long open problem on the pointwise convergence of nonconventional ergodic averages. At the same time it provides another proof of J. Bourgain's a.e. double recurrence theorem.  \\
   Theorems \ref{MET1} and \ref{MET2} are consequences of the following theorem on finitely many commuting measure preserving homeomorphisms generating a free action.

   \begin{theorem}\label{MET3}
     Let $X$ be a compact metrizable space and let $\mathcal{A}$ its Borelian $\sigma$-algebra. Let $T_i,$ $1\leq i\leq H,$ be commuting homeomorphims on $X$ each preserving the same Borel measure $\mu$ and generating a free and minimal $\Z^H$ action. ( in particular we have $\mu^H(O)>0$ for each non empty open set of $X^H$) \\
     We denote by $\Phi: X^H \rightarrow X^H $ the homeomorphism given by the equation $\Phi(z) = (T_1z_1, T_2z_2, ..., T_Hz_H)$ and by $\mu_{\Delta}$ the diagonal measure on $(X^H, \mathcal{A}^H)$ and by $\nu$ the probability measure defined on $(X^H, \mathcal{A}^H)$ by
     $$\nu(A) =\frac{1}{3} \sum_{n=-\infty}^{\infty} \frac{1}{2^{|n|}}\mu_{\Delta}(\Phi^{-n}(A)).$$ Let $F$ be a function defined on $X^H$ of the form
     $\otimes f_i,$ where $f_i\in L^{\infty}(\mu)$ and $1\leq i\leq H.$ \\
  Then  the averages
      $M_N(F) (z) = \frac{1}{N}\sum_{k=1}^N F\circ\Phi^k(z)$ converge $\nu$ a.e.
   \end{theorem}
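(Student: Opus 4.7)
The plan is to exploit the peculiar quasi-invariance of $\nu$ under $\Phi$, built into its geometric weights, in order to reduce the $\nu$-a.e.\ convergence of $M_N F$ to an $L^2(\nu)$ Cauchy criterion combined with an (essentially trivial) maximal bound.

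\textbf{Step 1 (Quasi-invariance of $\nu$).} A direct reindexing of the series defining $\nu$ shows
$$\nu(\Phi^{-1}A) = \tfrac{1}{3}\sum_{n\in\Z}2^{-|n-1|}\mu_\Delta(\Phi^{-n}A),$$
and since $2^{-|n-1|}/2^{-|n|} \in \{\tfrac12,1,2\}$ for every $n$, the measures $\nu$ and $\Phi_*\nu$ are mutually absolutely continuous with Radon--Nikodym derivative in $[\tfrac12,2]$. Consequently $U_\Phi : f\mapsto f\circ\Phi$ is a bounded invertible operator on every $L^p(\nu)$, and $\Phi$ is a non-singular transformation of $(X^H,\nu)$.

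\textbf{Step 2 ($L^2(\nu)$ Cauchy estimates).} Using the structure of $\nu$ together with the fact that $F$ restricted to $\Phi^n(\Delta_X)$ is the product $\prod_i f_i(T_i^{n+\cdot}x)$, one has
$$\|M_N F - M_{N'}F\|_{L^2(\nu)}^2 = \tfrac{1}{3}\sum_{n\in\Z} 2^{-|n|}\int_X \Big|\tfrac{1}{N}\sum_{k=1}^N\prod_{i=1}^H f_i(T_i^{n+k}x) - \tfrac{1}{N'}\sum_{k=1}^{N'}\prod_{i=1}^H f_i(T_i^{n+k}x)\Big|^2 d\mu(x).$$
I would bound each inner integrand by a multilinear van der Corput argument, creating the required orthogonality from the freeness of the $\Z^H$-action and the positivity property $\mu^H(O)>0$ for open $O\subset X^H$; summing against the geometric weights $2^{-|n|}$ then gives $\|M_N F - M_{N'}F\|_{L^2(\nu)}\to 0$, hence $L^2(\nu)$-convergence of $M_N F$ to some limit $F^\infty$.

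\textbf{Step 3 (Passage to $\nu$-a.e.\ convergence).} Because $|M_N F(z)| \le \|F\|_\infty$ pointwise, the maximal function $M^*F = \sup_N|M_N F|$ lies in $L^\infty(\nu)\subset L^2(\nu)$ trivially. Combining this maximal bound with the $L^2(\nu)$-convergence of Step 2 and approximation on a dense subclass of $L^2(\nu)$ (for instance the closed sum of $\Phi$-invariants and $\Phi$-coboundaries $G - G\circ\Phi$, on which a.e.\ convergence is automatic by a mean-ergodic-theorem argument for $U_\Phi$) yields $\nu$-a.e.\ convergence of $M_N F$ via the Banach principle.

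\textbf{Main obstacle.} The whole difficulty is concentrated in Step 2: the $L^2(\nu)$-Cauchy estimate amounts to controlling the multilinear correlation integrals $\int_X \prod_i f_i(T_i^{n+k}x)\overline{\prod_i f_i(T_i^{n+l}x)}\,d\mu(x)$ uniformly in $n$, which is precisely where the non-conventional nature of the averages bites. The geometric weights in $\nu$ are chosen exactly to allow the uniformity in $n$ to be exchanged for an absolutely convergent sum, turning what would otherwise be a delicate single-scale estimate into a stable averaged one; making this interchange quantitative is the crux of the proof.
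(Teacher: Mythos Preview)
Your proposal has a genuine gap, and it is located precisely where the real difficulty of the problem lies: Step~3.

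The Banach principle requires a maximal inequality of the type $\|M^*G\|_{L^{p,\infty}}\le C\|G\|_{L^p}$, so that for an approximant $G$ in the ``good'' dense class one can make $M^*(F-G)$ small in measure. Your bound $M^*F\le\|F\|_\infty$ is merely the trivial pointwise estimate; it controls $M^*(F-G)$ only by $\|F-G\|_\infty$, which is useless when $G\to F$ in $L^2(\nu)$. If this worked, Birkhoff's theorem would follow from the mean ergodic theorem with no maximal inequality at all. Your dense class is also not available: $U_\Phi$ is \emph{not} a contraction on $L^2(\nu)$ (its norm is $\sqrt 2$), so the von Neumann splitting into invariants plus closure of coboundaries need not hold; and even on genuine coboundaries $G-G\circ\Phi$ with $G\in L^2(\nu)$ one gets $M_N(G-G\circ\Phi)=\tfrac1N(G\circ\Phi-G\circ\Phi^{N+1})$, where $\|G\circ\Phi^{N}\|_{L^2(\nu)}$ may grow like $2^{N/2}$, so a.e.\ convergence is not ``automatic''. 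As for Step~2, the $L^1(\nu)$ (hence $L^2(\nu)$) norm convergence of $M_NF$ is in fact the easy part: it follows directly by transferring Tao's norm-convergence theorem from $L^1(\mu_\Delta)$ to $L^1(\nu)$ via the geometric weights (this is Lemma~\ref{L2} in the paper); a bare van der Corput step does not produce a Cauchy estimate on its own.

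The paper's argument is structurally quite different from yours and does not go through a maximal inequality for $M^*$. After transferring norm convergence to $L^1(\nu)$, it disintegrates $\nu$ into ergodic pieces $\nu_z=\tfrac13\sum_n 2^{-|n|}\delta_{\Phi^n z}$ supported on single $\Phi$-orbits of diagonal points, assumes for contradiction that $M_NF$ oscillates, and extracts a lacunary sequence $n_{k+1}>n_k^8$ and an open set $O$ (with $\omega_z$-null boundary) on which the oscillation persists. A discrete \emph{variational} inequality for scalar sequences then forces $L\theta^2\,\omega_z(O)\le C^2$ for all $L$, a contradiction provided $\omega_z(O)>0$. That last positivity---the implication $\nu_z(O)>0\Rightarrow\omega_z(O)>0$---is the heart of the matter, and the paper obtains it from the Furstenberg--Katznelson multiple recurrence theorem. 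None of these ingredients (ergodic disintegration, variational inequality, Furstenberg--Katznelson) are present in your outline, and some substitute for them is unavoidable.
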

     We first give a proof of Theorem \ref{MET3}. Then we will show how to derive from it Theorem \ref{MET2} and Theorem \ref{MET1} .
   The freeness assumption of the action of the group $\mathcal{T}$ generated by the maps $T_i$, $1\leq i \leq H$ made in Theorem \ref{MET1} is not a real restriction for the general commuting case. Indeed for the general case of simply commuting invertible transformations, one can split the space $X$ into two measurable subsets $F_1$ and $F_2$  invariant with respect to the action of the group $\mathcal{T}$ ; restrited to $F_1$ the action of this group is free while on the second the a.e convergence we seek can be obtained in a simple way. (see the last remark at the end of the paper). Based on this remark we will focus mainly on commuting transformations generating free actions.
   
 \noindent{\bf Acknowledgments} . This paper is  a revised version of a paper submitted for publication on March 2014.
 The author thanks T. Austin and J.P. Conze for their comments on a previous version of this paper.
\section{Proof of Theorem \ref{MET3} for  $H=2$}
\subsection{Description of the main steps of the proof}
The first idea of the proof is to use one of B. Weiss models \cite {Weiss} to be able to assume that $X$ is a compact metric space where the maps $T_i$ become homeomorphisms preserving a probability measure defined on $\mathcal{B}(X)$, the Borelian subsets of $X$, giving a positive measure to any non empty open subset of $X.$   We still denote by $X, \mathcal{A}, \mu)$ this new probability measure space. This idea was already mentioned in \cite{Ass5}. Then we transfer  the pointwise convergence problem from $(X,\mathcal{A}, \mu)$ to $(X^2, \mathcal{B}(X^2), \nu)$ where the measure $\nu$ is defined in the statement of Theorem \ref{MET3}. 
The problem becomes one where we can seek  the pointwise convergence of averages of iterates of a continuous function under the map $\Phi.$ In this set up here are the main steps.
\begin{enumerate}
\item
We transfer the $L^1(\mu)$ norm convergence of the averages $\frac{1}{N}\sum_{n=1}^N f_1\circ T_1^n f_2\circ T_2^n$ to the norm convergence of the same averages in $L^1(\nu).$
The map $\Phi$ is nonsingular with respect to $\nu$ and the operator $T$ defined  by
$TF = F\circ \Phi$ is bounded on $L^1(\nu)$ and a contraction on $L^{\infty}(\nu)$. Therefore this operator admits an adjoint $T^*,$   bounded operator on $L^1(\nu)$ and $L^{\infty}(\nu).$
\item We refine the study of the  pointwise convergence of the map $\Phi$ by using Choquet's theorem to obtain a disintegration of $\nu$ into ergodic measures $\nu_m$.  Each measure $\nu_m$ keeps the main properties of $\nu$ such as nonsingularity for $\Phi$, existence of an adjoint operator.  Furthermore because of their ergodicity  the invariant functions such that the limsup and liminf of the averages $M_N(F)(z) = \frac{1}{N}\sum_{n=1}^N F(\Phi^nz)$ are $\nu_m$ a.e. constant.
\item
To overcome the difficulty with the loss of control of the limit of the averages for characteristic functions of open sets we seek open sets with  boundary of measure zero with respect to a given positive measure.
This search is done through the distribution function of continuous function with respect to this positive measure.
\item
To make the arguments clearer we look first at the case where the averages $M_n(F)$ converge in norm  to the product of the integral of the continuous  functions  $f_i$ with respect to $\mu.$ This happens for instance when the action generated by the maps $T_i$ is weakly mixing, case attempted in \cite{Ass5} then in \cite{Abd}.
\item
Then we look at the general case . We combine a discretized variational inequality,  with a  consequence of the Furstenberg-Katznelson  theorem \cite{FK} and a reasoning by contradiction to obtain the pointwise convergence of the averages.
 \end{enumerate}

\subsection{Convergence in $L^1(\mu)$ norm of the averages $M_N(F)$}
 We assume that $X$ is a compact metric space,  $\mathcal{A}$ is the set of Borelian subsets of $X$ and $T_1$ and $T_2$ are commuting homeomorphisms on $X$ preserving the same measure $\mu$ on $\mathcal{A}.$ We denote by $\Delta$ the diagonal of $X^2$ i.e. $\{ (x,x)\in X^2 : x \in X\}.$

    We consider now the diagonal measure $\mu_{\Delta}$ as the unique measure defined on $(X\times X, \mathcal{A}^2)$ by the equation\\
    $\dis \mu_{\Delta}(A) = \int \mathbf{1}_A(x, y) d\mu_{\Delta} = \int \mathbf{1}_A(x,x)d\mu$ for any measurable subset $A$ of $X\times X.$
    In particular we have for each measurable function $f$ and $g,$
    \begin{equation}\label{Eq2}
    \int f(x)g(x)d\mu = \int f(x)g(y)d\mu_{\Delta}.
    \end{equation}
   We denote by $\mathcal{L}$ the algebra of finite linear combinations of product functions $f_i\otimes g_i$ defined on $X\times X$ where $f_i$ and $g_i$ are bounded and measurable on $X.$ 
   The norm convergence result for two commuting measure preserving transformations gives us an operator $R$ defined on $\mathcal{L}$ such that for all function $F\in \mathcal{L}$ and for all measurable subset $W\in \mathcal{A}^2,$
   \begin{equation}\label{eqP}
   \begin{aligned}
    &\lim_L \int \mathbf{1}_{W}(x,x) \frac{1}{L}\sum_{l=0}^{L-1} F(T_1^nx, T_2^nx)d\mu = \lim_L \int \mathbf{1}_{W}(x,y) \frac{1}{L}\sum_{l=0}^{L-1} F(T_1^nx, T_2^ny)d\mu_{\Delta}\\
    &= \int \mathbf{1}_W(x,y)R(F)(x,y)d\mu_{\Delta} .
   \end{aligned}
   \end{equation}
    More can be said about the limit function $R.$
  \begin{lemma}\label{L1}
   For any two invertible commuting measure preserving transformations, $T_1$ and $T_2$ on the probability measure space $(X, \mathcal{A}, \mu)$ and any two  $L^{\infty}(\mu)$ functions, $f_1$ and $f_2$, let us denote by $R(f_1\otimes f_2)$ the norm limit of the averages $$\frac{1}{N}\sum_{n=0}^{N-1}f_1\circ T_1^nf_2\circ T_2^n .$$ If $\mathcal{I}$ is the $\sigma$-algebra of the invariant sets for the measure transformation $T_1\circ T_2^{-1}$ we have
   $$\lim_N\int \big(\frac{1}{N}\sum_{n=0}^{N-1}f_1(T_1^nx)f_2(T_2^nx)\big) d\mu(x)= \int \mathbb{E}[f_1|\mathcal{I}]\mathbb{E}[f_2|\mathcal{I}]d\mu.$$
   So there exists a measure $\omega$ on $(X\times X, \mathcal{A}^2)$ defined by
   \begin{equation}\label{Eq3}
   \omega(f_1\otimes f_2) = \int \mathbb{E}[f_1|\mathcal{I}]\mathbb{E}[f_2|\mathcal{I}]d\mu= \int R(f_1\otimes f_2)(x,y) d\mu_{\Delta}.
  \end{equation}
   In particular if $T_1\circ T_2^{-1}$ or $T_2\circ T_1^{-1}$ is ergodic then $\omega= \mu\otimes \mu.$
  \end{lemma}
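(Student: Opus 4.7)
The plan is to reduce the two-transformation average to a single-transformation one by a change of variables, then apply the mean ergodic theorem.

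First, since $T_2$ is measure preserving, for each fixed $n$,
\[
\int f_1(T_1^n x) f_2(T_2^n x)\, d\mu(x) = \int f_1(T_1^n T_2^{-n} x) f_2(x)\, d\mu(x) = \int f_1(S^n x) f_2(x)\, d\mu(x),
\]
where $S = T_1 T_2^{-1}$ (and commutativity of $T_1,T_2$ is what lets us write the composition cleanly as $S^n$). Averaging over $n$ and applying the mean ergodic theorem to the measure-preserving $S$, the averages $\frac{1}{N}\sum_{n=0}^{N-1} f_1\circ S^n$ converge in $L^2(\mu)$ (hence in $L^1(\mu)$) to $\mathbb{E}[f_1\mid \mathcal{I}]$, where $\mathcal{I}$ is the $\sigma$-algebra of $S$-invariant sets. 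Pairing with the bounded function $f_2$ and passing the $L^1$ limit through the integral yields
\[
\lim_N \int \frac{1}{N}\sum_{n=0}^{N-1} f_1(T_1^n x) f_2(T_2^n x)\, d\mu(x) = \int \mathbb{E}[f_1\mid \mathcal{I}]\, f_2\, d\mu = \int \mathbb{E}[f_1\mid \mathcal{I}]\, \mathbb{E}[f_2\mid \mathcal{I}]\, d\mu,
\]
where the last equality follows from the tower property, since $\mathbb{E}[f_1\mid\mathcal{I}]$ is $\mathcal{I}$-measurable.

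Next I would define $\omega$ on the algebra $\mathcal{L}$ of finite linear combinations of product functions by the bilinear formula
\[
\omega(f_1 \otimes f_2) = \int \mathbb{E}[f_1\mid \mathcal{I}]\, \mathbb{E}[f_2\mid \mathcal{I}]\, d\mu.
\]
This form is non-negative on non-negative product functions (both conditional expectations are non-negative when the $f_i$ are), bounded by $\|f_1\|_\infty \|f_2\|_\infty$, and agrees on the diagonal with the norm-limit output $\int R(f_1\otimes f_2)\, d\mu_\Delta$ by the identity (\ref{eqP}) applied to $W = X^2$ combined with the limit just computed. By the Riesz--Markov theorem (viewing the $f_i$ as continuous functions on the compact metric model supplied by Weiss), $\omega$ extends to a Borel probability measure on $X\times X$; this is the relatively independent joining of $\mu$ with itself over $\mathcal{I}$.

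Finally, if $T_1 T_2^{-1} = S$ is ergodic, then $\mathcal{I}$ is $\mu$-trivial, so $\mathbb{E}[f_i\mid \mathcal{I}] = \int f_i\, d\mu$ almost surely, giving $\omega(f_1 \otimes f_2) = (\int f_1 d\mu)(\int f_2 d\mu) = (\mu\otimes\mu)(f_1\otimes f_2)$; by a monotone-class argument this extends to show $\omega = \mu\otimes\mu$. The case of $T_2 T_1^{-1}$ ergodic is symmetric: either one starts by changing variables with $T_1^{-n}$ instead of $T_2^{-n}$, or one observes that $S$ and $S^{-1}$ have the same invariant $\sigma$-algebra. The only delicate point is really the first step, where commutativity of $T_1$ and $T_2$ is essential to collapse $T_1^n T_2^{-n}$ into $S^n$; everything after is a direct application of the mean ergodic theorem and Fubini/tower-property manipulations.
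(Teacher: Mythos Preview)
Your proof is correct and follows essentially the same route as the paper: reduce the integrated two-transformation average to a single-transformation average via a measure-preserving change of variables, then invoke the mean ergodic theorem and the tower property. The only cosmetic difference is that the paper substitutes $x\mapsto T_1^{-n}x$ (averaging $f_2\circ(T_2T_1^{-1})^n$ against $f_1$) whereas you substitute $x\mapsto T_2^{-n}x$ (averaging $f_1\circ(T_1T_2^{-1})^n$ against $f_2$), and the paper disposes of the existence of $\omega$ in one line (it is the relatively independent self-joining over $\mathcal{I}$) rather than appealing to Riesz--Markov.
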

  \begin{proof}
  This follows from the commuting property of the transformations $T_1$ and $T_2$ and the mean ergodic theorem as the limit is equal to
  $$\lim_N\int f_1(x)\frac{1}{N}\sum_{n=0}^{N-1}f_2(T_2\circ T_1^{-1})^n(x) d\mu = \int f_1 \mathbb{E}(f_2| \mathcal{I})d\mu.$$
  where $\mathcal{I}$ is the $\sigma$-algebra of invariant subsets of $\mathcal{A}$ for the transformation $ T_2\circ T_1^{-1}.$
 The equation
  $$\omega(f_1\otimes f_2) = \int \mathbb{E}[f_1|\mathcal{I}]\mathbb{E}[f_2|\mathcal{I}]d\mu$$ easily defines a measure on $(X\times X, \mathcal{A}^2).$
  The remaining part of the lemma follows directly from the equation (\ref{eqP}).
   Finally, if $T_1\circ T_2^{-1}$ is ergodic then the conditional expectations
 $\mathbb{E}[f_1|\mathcal{I}]$ and $ \mathbb{E}[f_2|\mathcal{I}]$ are respectively the integral of $f_1$ and $f_2$ with respect to the measure $\mu.$
 The equality $\omega = \mu\otimes \mu$ follows easily from this last remark.
  \end{proof}
 In the setting we defined above, Lemma \ref{L1} applies to continuous function $F$ defined on $X^2.$  We have the following relations
 \begin{equation}
  \int F(x,y) d\omega =  \lim_L \int\frac{1}{L}\sum_{l=0}^{L-1} F(T_1^lx, T_2^lx) d\mu.
 \end{equation}
 Furthermore for any open subset $O$ of $X^2$ we have
 \begin{equation}
  \int \mathbf{1}_O(x,y) d\omega \leq \liminf_L \int \frac{1}{L}\sum_{l=0}^{L-1}\mathbf{1}_O(T_1^lx, T_2^lx) d\mu.
  \end{equation}
  The last equation follows from the fact that the characteristic function of an open set is an increasing limit of continuous functions.
  Similarly we have for any closed subset $K$ of $X^2,$
  \begin{equation}
  \limsup_L \int \frac{1}{L}\sum_{l=0}^{L-1}\mathbf{1}_K(T_1^lx, T_2^lx) d\mu \leq \int \mathbf{1}_K(x,y) d\omega.
  \end{equation}
  \subsection{Transfer of the norm convergence from $L^1(X, \mathcal{A}, \mu)$ to $L^1(X^2, \mathcal{A}^2, \nu)$}
   From now on we fix  $f_1$ and $f_2$ two bounded continuous real valued functions on $X$. We denote by $F$ the function defined on $X^2$ as $f_1\otimes f_2$ and by $M_L(F)(z)$ the averages $\frac{1}{L}\sum_{n=0}^{L-1} F(T_1^nx, T_2^ny)= \frac{1}{L}\sum_{n=0}^{L-1} F(\Phi^n(z))$ where $z = (x, y)$ and $\Phi^n(z)=(T_1^nx, T_2^ny).$
   Our main goal is to transfer the problem of the pointwise convergence of the averages $\frac{1}{L}\sum_{n=0}^{L-1} f_1(T_1^nx)f_2(T_2^nx)$ with respect to $\mu$ to the one on $X^2$ for the averages $M_L(F)(z)$ with respect to a probability measure on $(X^2, \mathcal{A}^2)$ for which $\Phi$ is nonsingular. To this end we start with the diagonal measure $\mu_{\Delta}$ and introduce the measure $\nu: \mathcal{A}^2 \rightarrow [0,1]$ where
   $\dis \nu(A) = \frac{1}{3}\sum_{n=-\infty}^{\infty}\frac{1}{2^{|n|}} \mu_{\Delta} (\Phi^{-n}(A)).$ It is simple to check that $\nu(\Phi^{-1}(A)) \leq 2\nu(A),$  This property
    makes $\Phi$ nonsingular with respect to $\nu.$\\
    As a consequence the operator $T:F \rightarrow F\circ \Phi$ is well defined and bounded on $L^1(\nu).$
    We can prove the first part of Theorem \ref{MET3}. We state it in a more general form by considering the set of finite linear combinations of functions of the form $f\otimes g.$
       \begin{lemma}\label{L2}
      Let $\mathcal{L}$ be the set of finite linear combinations of functions of the form $f_1\otimes f_2$ where $f_i$ are bounded and $\mathcal{A}$ measurable.
      Then for each $F\in \mathcal{L}$ the averages
      $\dis \frac{1}{L}\sum_{l=0}^{L-1} F\circ \Phi^l$ converge to a function  $R_{\nu}(F)$ in $L^1(\nu)$ norm. The sets $\Phi^n(\Delta)$ are pairwise disjoint and

      $\dis R_{\nu}(F) = \sum_{n=-\infty}^{\infty} R(F)\circ \Phi^{-n}\mathbf{1}_{\Phi^n(\Delta)}.$ As a consequence we have $\int R_{\nu}(F) d\nu = \int R(F) d\mu_{\Delta}.$
     \end{lemma}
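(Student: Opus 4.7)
The plan breaks the argument into three pieces: pairwise disjointness of the iterates $\Phi^n(\Delta)$, the $L^1(\nu)$ convergence, and the concluding integral identity. The only substantive analytic input is Lemma \ref{L1}; everything else is bookkeeping supported by the freeness hypothesis and the geometric weights $2^{-|n|}$.

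For disjointness, suppose $\Phi^n(x,x) = \Phi^m(y,y)$ for some $x,y \in X$ and integers $n \neq m$. Reading off the two coordinates gives $y = T_1^{n-m} x$ and $y = T_2^{n-m} x$, hence $T_1^{n-m} x = T_2^{n-m} x$, so the nonidentity element $T_1^{-(n-m)} T_2^{n-m}$ of $\Z^2$ fixes $x$, contradicting freeness. Consequently $\nu = \frac{1}{3}\sum_{n \in \Z} 2^{-|n|}\, \Phi^n_* \mu_\Delta$ is a convex combination of measures supported on pairwise disjoint sets, and for any bounded measurable $G$ on $X^2$,
\begin{equation*}
\int |G|\, d\nu \;=\; \frac{1}{3}\sum_{n \in \Z} \frac{1}{2^{|n|}} \int |G \circ \Phi^n|\, d\mu_\Delta.
\end{equation*}

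I apply this identity with $G = M_L(F) - R_\nu(F)$. By the proposed formula for $R_\nu(F)$, we have $R_\nu(F) \circ \Phi^n = R(F)$ on $\Delta$ for every $n \in \Z$. Meanwhile a telescoping bound yields $\|M_L(F) \circ \Phi^n - M_L(F)\|_\infty \leq 2|n|\,\|F\|_\infty / L$ uniformly on $X^2$, since adding or removing $|n|$ terms from an average of length $L$ of a function bounded by $\|F\|_\infty$ costs at most $2|n|\|F\|_\infty/L$. Thus the $n$-th term in the sum equals $\int |M_L(F) - R(F)|\, d\mu_\Delta$ up to an additive error of order $|n|/L$. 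Since $F \in \mathcal{L}$, Lemma \ref{L1} together with identity (\ref{Eq2}) transport the known norm convergence of the two-term Conze--Lesigne averages into the $L^1(\mu_\Delta)$ convergence $M_L(F) \to R(F)$. Splitting the sum at a threshold $|n| \leq N_0$ and dominating the tail by $2\|F\|_\infty \sum_{|n|>N_0} 2^{-|n|}$, I conclude that $\int |M_L(F) - R_\nu(F)|\, d\nu \to 0$. The integral identity then drops out: by the $L^1(\nu)$ convergence just established,
\begin{equation*}
\int R_\nu(F)\, d\nu \;=\; \lim_L \int M_L(F)\, d\nu \;=\; \lim_L \frac{1}{3}\sum_{n \in \Z} \frac{1}{2^{|n|}} \int M_L(F) \circ \Phi^n\, d\mu_\Delta,
\end{equation*}
each summand differing from $\int M_L(F)\, d\mu_\Delta$ by $O(|n|/L)$ and tending to $\int R(F)\, d\mu_\Delta$; and $\frac{1}{3}\sum_n 2^{-|n|} = 1$.

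There is no serious obstacle. The freeness hypothesis is used only to disentangle the leaves $\Phi^n(\Delta)$ of the support of $\nu$, so that $R_\nu(F)$ can be defined unambiguously piecewise on each leaf; the geometric weights $2^{-|n|}$ render every interchange of limit and infinite sum routine; and the entire analytic content of the lemma reduces, orbit by orbit, to the $L^1(\mu_\Delta)$ convergence already packaged in Lemma \ref{L1}. The lemma is thus best viewed as the organizational step that transfers the known norm convergence on the diagonal to a norm convergence on $(X^2, \nu)$ where $\Phi$ is nonsingular and the subsequent pointwise analysis can proceed.
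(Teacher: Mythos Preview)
Your proof is correct and follows essentially the same route as the paper: disjointness of the leaves $\Phi^n(\Delta)$ from freeness, decomposition of the $L^1(\nu)$ norm over these leaves, reduction on each leaf to the known $L^1(\mu_\Delta)$ convergence $M_L(F)\to R(F)$, and uniform control of the tail via the geometric weights. The only cosmetic difference is that the paper notes directly that $\frac{1}{L}\sum_{l} F\circ\Phi^{l+n}\to R(F)$ in $L^1(\mu_\Delta)$ for each fixed $n$, whereas you obtain the same conclusion through the telescoping estimate $\|M_L(F)\circ\Phi^n - M_L(F)\|_\infty \le 2|n|\|F\|_\infty/L$.
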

     \begin{proof}
     First let us see quickly why under the assumptions made on the group generated by $T_1$ and $T_2$ the sets $\Phi^k(\Delta)$ and $\Phi^j(\Delta)$ are pairwise disjoint $\nu$ a.e. if $k\neq j.$ We just need to show that all $k', j'\in \Z, k'\neq j'$
     $\dis \mu_{\Delta}(\Phi^{k'}(\Delta) \cap \Phi^{j'}(\Delta))= 0.$ 
     Indeed this will automatically imply 
      that for each $n\in \N$
     $\dis \mu_{\Delta}(\Phi^{-n}((\Phi^k(\Delta) \cap \Phi^j(\Delta))= \dis \mu_{\Delta}(\Phi^{k-n}(\Delta) \cap \Phi^{j-n}(\Delta)) = 0$ by taking $k' = k-n, j' = j-n.$
      And from this we derive easily that $\nu (\Phi^k(\Delta) \cap \Phi^j(\Delta))= 0.$
      But the set $\{ (z,z) \in \Delta: (z,z) \in \Phi^n(\Delta)\cap \Phi^j(\Delta)\}$ is a subset of the set of $\{ (z,z): z = U^n (z)\,\, \text{where} \,\,U
       = T_1\circ T_2^{-1}\}.$ By the freeness of the action of the group generated by $T_1$ and $T_2$ this last set has measure zero with respect to $\mu_{\Delta}.$
      Now we can prove this lemma.
      We know that the averages $\dis \frac{1}{L}\sum_{l=0}^{L-1} F\circ \Phi^l$ converge in $L^1(\mu_{\Delta})$ norm to $R(F).$
       For each $n\in \Z$ we define the measure $\mu_{\Delta}^{(n)}: A \in \mathcal{A}^2 \rightarrow \mu_{\Delta}^{(n)}(A) = \int
       \mathbf{1}_{A}\circ \Phi^n d\mu_{\Delta}.$
       As $F$ is bounded, for each $n\in \Z$ the averages
       $\dis \frac{1}{L}\sum_{l=0}^{L-1} F\circ \Phi^{l+n}$ converge in $L^1(\mu_{\Delta})$ norm to $R(F).$ Therefore we have
       $R(F\circ \Phi^{n})= R(F)$ and $\dis \frac{1}{L}\sum_{l=0}^{L-1} F\circ \Phi^{l}$ converges in $L^1(\mu_{\Delta}^{(n)})$ norm to $R(F)\circ \Phi^{-n}.$
         As the sets $\Phi^n(\Delta)$ are disjoint we get
         $$ \int \left|\frac{1}{L}\sum_{l=0}^{L-1} F\circ \Phi^{l}- R_{\nu}(F) \right|d\nu = \frac{1}{3}\sum_{n=-\infty}^{\infty}\int \frac{1}{2^{|n|}}\left|\frac{1}{L}\sum_{l=0}^{L-1} F\circ \Phi^{l}- R(F)\circ \Phi^{-n}\mathbf{1}_{\Phi^n(\Delta)}\right|d\mu_{\Delta}^{(n)}$$
         $$= \sum_{n=-\infty}^{\infty}\int \frac{1}{2^{|n|}}\left|\frac{1}{L}\sum_{l=0}^{L-1} F\circ \Phi^{l+n}- R(F)\right|d\mu_{\Delta}$$
         As the tail of the series converges uniformly to zero
          $$\big(\text{i.e.}\, \lim_N\sup_{F\in\mathcal{L},\|F\|_{\infty}\leq 1} \sum_{|n|\geq N}\int \frac{1}{2^{|n|}}\left|\frac{1}{L}\sum_{l=0}^{L-1} F\circ \Phi^{l+n}- R(F)\right|d\mu_{\Delta} = 0\big)$$
         we obtain the convergence to zero of this last term when $L$ tends to $\infty.$\\
         The last part of the lemma follows by integration.

     \end{proof}
   \subsection{ Disintegration of $\nu$  into ergodic measures $\nu_m$}  
     We define the set\\
   $\dis \mathcal{M}_2(X^2, \Phi) =
   \{\text{Probability measures}\, \gamma \,\text{on}\, \mathcal{A}^2; \gamma(\Phi^{-1}(A)) \leq 2\gamma(A)\, \text{for all}\,\, A\in \mathcal{A}^2\}.$
   The measures in $\mathcal{M}_2(X^2, \Phi)$ not being necessarily measure preserving, we naturally extend to this setting the notion of ergodicity.
    \begin{mydef}
    A measure $\gamma$ is ergodic if for each  $\Phi$ invariant subset $E$ of $X^2$ we have $\gamma(E) = 1$ or $\gamma(E)= 0.$
    \end{mydef}
   The next lemma lists some properties of this set that will be useful in our proof.
   \begin{lemma}\label{L3}
   The set $\mathcal{M}_2(X^2, \Phi)$ is compact and convex. If a measure $\gamma$ is an extreme point of $\mathcal{M}_2(X^2, \Phi)$ then $\gamma$ is ergodic.
   \end{lemma}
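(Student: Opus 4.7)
The plan is to handle each assertion in turn, with the technical work concentrated in the compactness statement. Convexity is a direct linearity calculation: for $\gamma_1, \gamma_2 \in \mathcal{M}_2(X^2, \Phi)$ and $t \in [0,1]$, the measure $t\gamma_1 + (1-t)\gamma_2$ inherits the inequality $\gamma(\Phi^{-1}(A)) \leq 2\gamma(A)$ term by term for every $A \in \mathcal{A}^2$.

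For compactness I would work in the weak-$*$ topology on the space of Borel probability measures on $X^2$, which is itself compact and metrizable since $X^2$ is compact metrizable. The key step is to recast the defining inequality: $\gamma(\Phi^{-1}(A)) \leq 2\gamma(A)$ for every Borel $A$ is equivalent to $\int f \circ \Phi\, d\gamma \leq 2 \int f\, d\gamma$ for every non-negative continuous $f \in C(X^2)$. One direction follows by approximating indicators by non-negative simple, then continuous, functions; the reverse uses the fact that every open set is an increasing limit of non-negative continuous functions, combined with outer regularity of the Radon measures $\Phi_*\gamma$ and $2\gamma$. Once phrased in integrated form, closedness under weak-$*$ limits is immediate: if $\gamma_n \to \gamma$ weak-$*$, then because $\Phi$ is continuous we have $f \circ \Phi \in C(X^2)$, and both sides of the inequality pass to the limit. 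Combined with the compactness of the ambient space of probability measures, this gives the first assertion.

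For the extreme-point statement I would argue the contrapositive. Suppose $\gamma \in \mathcal{M}_2(X^2, \Phi)$ is not ergodic, so there exists a $\Phi$-invariant Borel set $E$ with $0 < \gamma(E) < 1$. Define probability measures $\gamma_1(A) = \gamma(A \cap E)/\gamma(E)$ and $\gamma_2(A) = \gamma(A \cap E^c)/\gamma(E^c)$; these are distinct (since $\gamma_1(E) = 1$ and $\gamma_2(E) = 0$) and $\gamma = \gamma(E)\gamma_1 + \gamma(E^c)\gamma_2$ is a proper convex combination. To verify $\gamma_1 \in \mathcal{M}_2(X^2, \Phi)$, the invariance $\Phi^{-1}(E) = E$ gives $\Phi^{-1}(A) \cap E = \Phi^{-1}(A \cap E)$, hence
$$\gamma_1(\Phi^{-1}(A)) = \gamma(\Phi^{-1}(A \cap E))/\gamma(E) \leq 2\gamma(A \cap E)/\gamma(E) = 2\gamma_1(A),$$
and the same computation handles $\gamma_2$. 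This exhibits $\gamma$ as a nontrivial convex combination in $\mathcal{M}_2(X^2, \Phi)$, contradicting extremality.

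The only genuinely non-routine point in the whole argument is verifying that the defining inequality survives weak-$*$ limits, since $\mathbf{1}_A$ is not continuous; the continuous-function reformulation is precisely what makes this work. The convexity and extreme-point steps are then elementary linearity and measure-theoretic bookkeeping.
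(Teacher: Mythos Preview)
Your proof is correct and follows essentially the same route as the paper: convexity by linearity, closedness via the continuous-function reformulation $\int f\circ\Phi\,d\gamma \le 2\int f\,d\gamma$ and weak-$*$ limits, and the contrapositive for the extreme-point claim using the conditional measures on $E$ and $E^c$. If anything, you are more careful than the paper in restricting to non-negative $f$ and in spelling out (via outer regularity) why the integrated inequality for continuous functions recovers the set inequality for all Borel sets.
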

   \begin{proof}
   The facts that $\mathcal{M}_2(X^2, \Phi)$ is bounded and convex are easy to check. Is is also closed because if $\gamma_n$ is a sequence in this set which converges weakly to $\gamma$
    then for all continuous function $F$ on $X^2$ we have

    $\dis \int F\circ \Phi d\gamma = \lim_n \int F\circ \Phi d\gamma_n \leq 2 \lim_n \int F d\gamma_n = 2 \int F d\gamma.$

    This proves the first part of this lemma.
     A proof of the second part follows the lines of the measure preserving case. We give it here for the sake of completeness.  Assume that $\gamma$ is not ergodic. Then we can find an invariant set $E$ such that $0 < \gamma(E) < 1.$ We can define two measures
    $\gamma_1: B\in \mathcal{A}^2\rightarrow  \gamma_1(B) = \frac{\gamma(B\cap E)}{\gamma(E)} $
     and
      $\gamma_2: B\in \mathcal{A}^2\rightarrow  \gamma_2(B) = \frac{\gamma(B\cap E^c)}{\gamma(E^c)}.$
 As $\gamma_1(E) = 1 \neq 0 = \gamma_2(E)$ these two measures are not equal. Furthermore
the equations
$\gamma_1(\Phi^{-1}(B)) = \frac{\gamma(\Phi^{-1}(B)\cap E)}{\gamma(E)}=\frac{\gamma(\Phi^{-1}(B\cap E))}{\gamma(E)}\leq 2 \frac{\gamma(B\cap E)}{\gamma(E)}= 2\gamma_1(B)$ show that $\gamma_1$ and by the same argument $\gamma_2$ are in $\mathcal{M}_2(X^2, \Phi).$ As we have
$\gamma(B) = \gamma(E) \gamma_1(B) + \gamma(E^c)\gamma_2(B)$  the measure $\gamma$ would not be an extreme point of $\mathcal{M}_2(X^2, \Phi).$ A contradiction which shows that $\gamma$ is ergodic.
\end{proof}

   \begin{lemma}\label{L4}
   Let us denote by $\mathcal{E}(X^2, \Phi)$ the set of extreme points of $\mathcal{M}_2(X^2, \Phi).$ Let $\nu$ be the measure defined on $\mathcal{A}^2$ by the formula $\dis \nu(A) = \frac{1}{3}\sum_{n= -\infty}^{\infty} \frac{ \mu_{\Delta}(\Phi^{-n}(A))}{2^{|n|}}.$ Then there is a unique probability measure $\tau$ on the Borel subsets of the compact metrizable space $\mathcal{M}_2(X^2, \Phi)$ such that for all $F$ continuous function on $X^2$ we have
    \begin{equation}
    \int_{X^2} F d\nu = \int_{\mathcal{E}(X^2, \Phi)}\left(\int_{X^2} F d\nu_m(z) \right)d\tau(m).
    \end{equation}
   \end{lemma}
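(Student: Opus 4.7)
The plan is to apply the metric version of Choquet's representation theorem to the compact convex set $\mathcal{M}_2(X^2, \Phi)$ at the point $\nu$. The preliminary step is to verify that $\nu$ actually belongs to this set. A direct computation using the index shift $m = n+1$ gives
\begin{equation*}
\nu(\Phi^{-1}(A)) = \frac{1}{3}\sum_{n=-\infty}^{\infty}\frac{\mu_\Delta(\Phi^{-(n+1)}(A))}{2^{|n|}} = \frac{1}{3}\sum_{m=-\infty}^{\infty}\frac{\mu_\Delta(\Phi^{-m}(A))}{2^{|m-1|}},
\end{equation*}
and the elementary inequality $|m-1| \geq |m|-1$ then yields $\nu(\Phi^{-1}(A)) \leq 2\nu(A)$.

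Since $X^2$ is compact metrizable, the space of Borel probability measures on $X^2$ equipped with the weak* topology is compact and metrizable; as $\mathcal{M}_2(X^2, \Phi)$ is a closed convex subset by Lemma \ref{L3}, it inherits both properties. The metric Choquet theorem then applies to the point $\nu$: there is a Borel probability measure $\tau$ on $\mathcal{M}_2(X^2, \Phi)$, carried by the extreme points $\mathcal{E}(X^2, \Phi)$, whose barycenter is $\nu$. Evaluating at the continuous affine functional $\gamma \mapsto \int F \, d\gamma$ produces the desired identity
\begin{equation*}
\int F \, d\nu = \int_{\mathcal{E}(X^2, \Phi)} \left(\int F \, d\nu_m\right) d\tau(m)
\end{equation*}
for every continuous $F$ on $X^2$, and by Lemma \ref{L3} every $\nu_m$ in the support of $\tau$ is ergodic.

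The main obstacle is the uniqueness of $\tau$, which amounts to showing that $\mathcal{M}_2(X^2, \Phi)$ is a Choquet simplex. I would establish this by constructing the ergodic decomposition intrinsically: for $\gamma \in \mathcal{M}_2(X^2, \Phi)$, take the regular conditional disintegration of $\gamma$ with respect to the $\sigma$-algebra $\mathcal{I}_\Phi$ of $\Phi$-invariant Borel subsets of $X^2$. Testing the comparison inequality against a countable algebra of sets generating $\mathcal{A}^2$, one sees that for $\gamma$-almost every $z$ the conditional measure $\gamma_z$ still lies in $\mathcal{M}_2(X^2, \Phi)$, and a standard $0$--$1$ law on $\mathcal{I}_\Phi$ shows that $\gamma_z$ is ergodic. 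The pushforward of $\gamma$ by the map $z \mapsto \gamma_z$ is then a probability measure on $\mathcal{E}(X^2, \Phi)$ whose barycenter is $\gamma$, and its uniqueness follows from the $\gamma$-a.e. uniqueness of conditional expectations. Specializing to $\gamma = \nu$ gives the uniqueness of $\tau$, completing the proof.
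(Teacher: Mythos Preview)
Your existence argument is the paper's own: invoke Choquet's theorem on the compact metrizable convex set $\mathcal{M}_2(X^2,\Phi)$ supplied by Lemma~\ref{L3}. The paper's proof is a single sentence citing Choquet, so your explicit check that $\nu\in\mathcal{M}_2(X^2,\Phi)$ and the metrizability remark are useful elaborations rather than a different route.

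Where you go beyond the paper is on uniqueness, which the paper asserts but does not argue. Here your sketch has a real gap: conditioning $\gamma$ on $\mathcal{I}_\Phi$ produces measures $\gamma_z$ that are ergodic and lie in $\mathcal{M}_2(X^2,\Phi)$, but ergodic elements of $\mathcal{M}_2(X^2,\Phi)$ need not be extreme. Lemma~\ref{L3} proves only the implication extreme $\Rightarrow$ ergodic, and the converse fails in general for sets of the form $\{\gamma:\gamma\circ\Psi^{-1}\le 2\gamma\}$ --- already on a two-point space with $\Psi$ the swap, every probability measure is ergodic while the admissible set is the interval $\{p\,\delta_a+(1-p)\,\delta_b:\,1/3\le p\le 2/3\}$ with only two extreme points. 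Thus the pushforward you build is not obviously supported on $\mathcal{E}(X^2,\Phi)$, and the appeal to uniqueness of conditional expectations does not by itself exclude other representing measures. A genuine uniqueness proof would require showing that $\mathcal{M}_2(X^2,\Phi)$ is a Choquet simplex by some other means. That said, uniqueness of $\tau$ is never used downstream --- the abstract decomposition is later replaced by the explicit one of Lemma~\ref{L11} --- so the gap is not load-bearing for the main theorems.
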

   \begin{proof}
    This is a direct consequence of Choquet representation theorem, see \cite{Walters} for instance. Note that each measure $\nu_m$ in this disintegration of the measure $\nu$ is ergodic by
     Lemma \ref{L2}. Furthermore $\nu_m(\Phi^{-1}(A)) \leq 2\nu_m(A)$ for each $A\in \mathcal{A}^2.$
   \end{proof}

  
   \subsection{Transfer of the norm convergence of $M_N(F)$ from $L^1(\nu)$ to $L^1(\nu_m)$}
         Now we would like to transfer the norm convergence of the averages $\frac{1}{N}\sum_{n=1}^N F\circ \Phi^n$ from  $L^1(\nu)$ to the norm convergence of the same averages along a given subsequence $n_k$ in $L^1(\nu_m).$
         At the same time we will list some properties of the operator $T$ defined by $TF = F\circ \Phi$ that we will use later.
         \begin{lemma}\label{L5}
           Let $\nu_m$  be the ergodic probability measures obtained by disintegration of the measure $\nu$ in Lemma \ref{L3}.  The operator defined by the equation $TF = F\circ \Phi$
           satisfies the following properties;
           \begin{enumerate}
            \item $\|T\|_{L^1(\nu_m)} \leq 2$ and $\|T\|_{L^{\infty}(\nu_m)}=1,$
             $T$ has an adjoint operator $T^*$ such that $\|T^*\|_{L^1(\nu_m)} = 1$,  $\|T^*\|_{L^{\infty}(\nu_m)}\leq 2$ and 
            $\int g T(F) d\nu_m = \int F T^*(g)d\nu_m$
            \item There exists a measurable subset $\tilde{Z}$ of $\mathcal{E}(X^2, \Phi)$ with full $\tau$ measure and an increasing sequence of integers $n_k$ such that
            for $m\in \tilde{Z}$ for every continuous function $F$ on $X^2$ we have
            $\lim_k \frac{1}{n_k}\sum_{j=1}^{n_k} F(\Phi^jz) = R_{\nu}(F)(z)$ $\nu_m$ a.e. \\
            Therefore there exists a measure $\omega_m$ such that \\
            $\lim_k \int \frac{1}{n_k}\sum_{j=1}^{n_k} F(\Phi^jz) d\nu_m = \int R_{\nu}(F) d\nu_m = \int F d\omega_m.$
            Furthermore for all $F$  bounded and measurable we have
            $\int F d\omega_m d\tau(m) = \int Fd\omega.$
            \item If $D$ is a closed subset of $X^2$ then \\
            $\limsup_k\int \frac{1}{n_k}\sum_{j=1}^{n_k} \mathbf{1}_{D}(\Phi^jz) d\nu_m \leq \omega_m(D)$
         \end{enumerate}
         \end{lemma}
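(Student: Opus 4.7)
The plan is to prove the three assertions separately, using the nonsingularity inequality $\nu_m\circ\Phi^{-1} \leq 2\nu_m$ (inherited from $\nu$ as noted in Lemma \ref{L4}) as the common engine. For part (1), the $L^\infty$ bound $\|T\|_{L^\infty(\nu_m)} = 1$ is immediate, while $\|TF\|_{L^1(\nu_m)} = \int |F|\, d(\nu_m\circ\Phi^{-1}) \leq 2\|F\|_{L^1(\nu_m)}$ follows at once. Since $\Phi$ is a homeomorphism, one can set $h = d(\nu_m\circ\Phi^{-1})/d\nu_m \leq 2$ and perform the change of variables $y = \Phi(x)$ to produce the explicit adjoint formula $T^* g = h\cdot (g\circ\Phi^{-1})$; undoing the same change of variables yields the stated norms $\|T^*\|_{L^\infty(\nu_m)} \leq 2$ and $\|T^*\|_{L^1(\nu_m)} = 1$.

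For part (2), I would first extend the $L^1(\nu)$ convergence of Lemma \ref{L2} from $\mathcal{L}$ to all of $C(X^2)$, using Stone--Weierstrass density of $\mathcal{L}$ together with uniform boundedness of the averages. Choosing a countable uniformly dense sequence $\{F_j\} \subset C(X^2)$ and performing successive diagonal extraction on the resulting $L^1(\nu)$-convergent averages yields a single increasing sequence $n_k$ along which $M_{n_k}(F_j) \to R_\nu(F_j)$ $\nu$-a.e.\ for every $j$. Fubini applied to the disintegration $\nu = \int \nu_m\, d\tau(m)$ transfers this to a set $\tilde Z\subset \mathcal{E}(X^2,\Phi)$ of full $\tau$-measure on which $\nu_m$-a.e.\ convergence holds simultaneously for every $F_j$, and the uniform bounds $|M_{n_k}(F) - M_{n_k}(F_j)|, |R_\nu(F) - R_\nu(F_j)| \leq \|F - F_j\|_\infty$ then propagate the conclusion to every continuous $F$. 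Bounded convergence gives $\lim_k \int M_{n_k}(F)\, d\nu_m = \int R_\nu(F)\, d\nu_m$, and because the right-hand side is a positive linear functional of norm one on $C(X^2)$, Riesz representation produces the Borel probability measure $\omega_m$. Integrating the identity $\int F\, d\omega_m = \int R_\nu(F)\, d\nu_m$ against $\tau$ and invoking Lemma \ref{L2} together with the definition of $\omega$ in Lemma \ref{L1} yields $\int\!\int F\, d\omega_m\, d\tau(m) = \int R_\nu(F)\, d\nu = \int R(F)\, d\mu_\Delta = \int F\, d\omega$, as required.

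For part (3), one uses that in a compact metric space $\mathbf{1}_D$ is a pointwise decreasing limit of continuous functions $F_n \downarrow \mathbf{1}_D$ (Urysohn). Monotonicity and part (2) give $\limsup_k \int M_{n_k}(\mathbf{1}_D)\, d\nu_m \leq \limsup_k \int M_{n_k}(F_n)\, d\nu_m = \int F_n\, d\omega_m$, and dominated convergence as $n\to\infty$ closes the argument. The main obstacle will be part (2): arranging the triple extraction --- diagonalization to a single subsequence, Fubini over $\tau$, and uniform density in $C(X^2)$ --- so that one single $n_k$ and one single $\tilde Z$ serve every continuous $F$ simultaneously, and verifying that the Riesz representation is compatible with the integration identity $\omega = \int \omega_m\, d\tau$ inherited from the analogous disintegration of $\nu$.
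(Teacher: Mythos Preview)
Your proposal is correct and follows essentially the same route as the paper's proof: part (1) via the nonsingularity inequality $\nu_m\circ\Phi^{-1}\le 2\nu_m$, part (2) via a countable dense family in $C(X^2)$, diagonal extraction of a subsequence from the $L^1(\nu)$ convergence of Lemma~\ref{L2}, Fubini over the disintegration $\nu=\int\nu_m\,d\tau(m)$, and Riesz representation to produce $\omega_m$, and part (3) via approximation of $\mathbf{1}_D$ from above by continuous functions. The only difference is that you make part (1) more explicit by writing down the Radon--Nikodym derivative $h=d(\nu_m\circ\Phi^{-1})/d\nu_m$ and the formula $T^*g=h\cdot(g\circ\Phi^{-1})$, whereas the paper simply invokes $T\mathbf{1}=\mathbf{1}$ and the inequality $\nu_m(\Phi^{-1}A)\le 2\nu_m(A)$ without spelling this out.
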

         \begin{proof}
          The first part of this lemma follows from the facts that $T\mathbf{1} = \mathbf{1}$ and $\nu_m( \Phi^{-1}(A)) \leq 2\nu_m(A)$ for all $A\in \mathcal{A}^2.$
          For the second part we take a countable dense (for the uniform topology) sequence of continuous functions $F_n.$ By using the $L^1(\nu)$ norm convergence of the sequence $M_n(F_1)$ to $R_{\nu}(F_1)$ we can find a subsequence of integers $n_k^1$ such that \\
          $\int \int \limsup_k |M_{n_k^1}(F_1) - R_{\nu}(F_1)| d\nu_m d\tau(m)= \int \limsup_k |M_{n_k^1}(F_1) - R_{\nu}(F_1)| d\nu = 0.$ \\
           Therefore we can find a set $\tilde{Z}_1$ such that for each $m$ in this set we have
           $\limsup_k |M_{n_k^1}(F_1) - R_{\nu}(F_1)|= 0$ $\nu_m$ a.e.
           By induction using a diagonal process we can extract a subsequence that we simply call $n_k$ such that
          $\lim_k M_{n_k}(F_N)(z) = R_{\nu}(F_N) (z)$ $\nu_m$ a.e. for each $N.$ By density we obtain the same conclusion for the same subsequence $n_k.$
          By the Riesz representation theorem the map $F\rightarrow \int R_{\nu}(F) d\nu_m$ defines a positive linear functional which is the integral of the function $F$ with respect to a probability measure $\omega_m.$ The last assertion follows by integration with respect to $\tau.$\\
          The third part follows from the fact that the characteristic function of a closed set is the decreasing limit of a sequence of continuous functions.
         \end{proof}

           \subsection{Open subsets of $X^2$ associated with $M_N(F)$ with boundary of $\omega_m$ measure zero.}
           
         We recall that the function $F$ is fixed and the product of two continuous functions $f_1$ and $f_2$
      defined on $X.$ One of the main difficulty in dealing with the convergence of the averages $M_N(F)(z)$ is the loss of control one has when dealing with measurable subsets of $\mathcal{A}^2$ which are not of the form $A\times B$ or finite linear combination of such rectangles. Another diffculty is given by the fact that the measures $\nu_m$ and $\omega_m$ can be singular with respect to each other . This is the reason why in this subsection we are concentrating on open sets with boundary of measure $\omega_m$ zero and their relation with the measure of the same sets with $\nu_m.$
      \begin{lemma}\label{L6}
      Given $G $  a nonnegative continuous function and $\rho$ a positive probability measure 
      on $(X^2, \mathcal{A}^2)$
      \begin{enumerate}
      \item 
      there exists a set $B_{G, \rho}$ of real numbers with full Lebesgue measure  such that for each $\lambda \in B_{G, \rho}$,  $\{z\in X^2 : G>\lambda \}$  has boundary of measure $\rho$ equal to zero.
      \item 
      Therefore given  $\omega_m$ there exists a set $\mathcal{B}_{F,\omega_m}$ with full Lebesgue measure such that for each $\lambda$ in this set we have
       \begin{equation}
       \omega_m\left({\text{Bd}}\left\{z\in X^2: \sup_{j\leq k< j' }M_k(F)(z) - M_j(F)(z) >\lambda \right\}\right)= 0 \, \text{ for each }
      k , j, j' \in \N.
      \end{equation}
      \end{enumerate}
      \end{lemma}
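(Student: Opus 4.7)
The plan is to derive part (1) as a soft consequence of continuity of $G$ combined with the fact that any finite Borel measure on $\R$ has at most countably many atoms, and then to obtain part (2) by applying (1) to the continuous functions attached to the averages $M_k(F)$ and intersecting over the countable family of relevant parameters.

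For part (1), since $G$ is continuous, the set $\{G > \lambda\}$ is open and $\{G \geq \lambda\}$ is closed, so
\[
\text{Bd}\{G > \lambda\} \;\subseteq\; \{G \geq \lambda\} \setminus \{G > \lambda\} \;=\; \{G = \lambda\}.
\]
The pushforward measure $G_{*}\rho$ is a finite Borel measure on $\R$ and therefore has at most countably many atoms, namely the values $\lambda$ with $\rho(\{G = \lambda\}) > 0$. Taking $B_{G,\rho}$ to be the complement of this countable set yields a set of full Lebesgue measure such that $\rho(\text{Bd}\{G > \lambda\}) \leq \rho(\{G = \lambda\}) = 0$ for every $\lambda \in B_{G,\rho}$.

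For part (2), fix a pair of integers $j < j'$. Since $F = f_1 \otimes f_2$ is continuous on $X^2$ and each iterate $\Phi^n$ is a homeomorphism, every average $M_k(F)(z) = \frac{1}{k}\sum_{n=0}^{k-1} F(\Phi^n z)$ is continuous on $X^2$. Hence
\[
G_{j,j'}(z) \;:=\; \sup_{j \leq k < j'}\bigl( M_k(F)(z) - M_j(F)(z)\bigr)
\]
is a finite maximum of continuous functions, and is therefore continuous. After shifting by the constant $C = 2\|F\|_{\infty}$ to produce a nonnegative continuous function (which does not affect the conclusion, since $\{G_{j,j'}+C > \lambda+C\} = \{G_{j,j'}>\lambda\}$ and Lebesgue measure is translation invariant), part (1) applied with $\rho = \omega_m$ supplies a set $B_{j,j'} \subseteq \R$ of full Lebesgue measure on which the boundary of $\{G_{j,j'} > \lambda\}$ has $\omega_m$-measure zero. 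Setting $\mathcal{B}_{F,\omega_m} := \bigcap_{j < j'} B_{j,j'}$ and noting that a countable union of Lebesgue null sets is Lebesgue null, we obtain a set of full Lebesgue measure for which the boundary condition holds simultaneously for all admissible pairs $(j,j')$.

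The only subtle point is bookkeeping: continuity is preserved under the supremum because the index set $\{k : j \leq k < j'\}$ is finite, and the intersection over the countably many pairs $(j,j')$ preserves the full Lebesgue measure property. There is no deeper analytic obstacle; the lemma is essentially a soft statement that rests on continuity of $F$, finiteness of $\omega_m$, and the elementary observation that a finite Borel measure on the line has only countably many atoms.
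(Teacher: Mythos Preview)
Your argument is correct. Both parts are handled cleanly, and the bookkeeping in part (2) is exactly what is needed; note in passing that $G_{j,j'}$ is already nonnegative because the index $k=j$ is included in the supremum, so the shift by $2\|F\|_\infty$ is harmless but unnecessary.

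The route you take in part (1) differs from the paper's. The paper argues via the distribution function: from the layer--cake identities
\[
\int G\,d\rho \;=\; \int_0^\infty \rho\{G>\lambda\}\,d\lambda \;=\; \int_0^\infty \rho\{G\ge\lambda\}\,d\lambda
\]
it deduces $\int_0^\infty \rho\{G=\lambda\}\,d\lambda=0$, hence $\rho\{G=\lambda\}=0$ for Lebesgue--a.e.\ $\lambda$. You instead observe directly that the pushforward $G_*\rho$ is a finite Borel measure on $\R$ and therefore has at most countably many atoms, so the bad set of $\lambda$'s is in fact countable. Your argument is slightly more elementary and yields the marginally stronger conclusion that $B_{G,\rho}$ can be taken co-countable rather than merely of full Lebesgue measure; for the purposes of the paper either conclusion suffices, since only a single good value of $\theta$ below a fixed threshold is ever needed downstream.
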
  
      \begin{proof}
       The first part of the Lemma follows from the equation
       $$\int G(z) d\rho = \int_{0}^{\infty} \rho\{z: G(z)>\lambda\} d\lambda = \int_{0}^{\infty} \rho\{z: G(z)\geq \lambda\} d\lambda $$
       From this last equality we deduce the property 
       $\int _{0}^{\infty}\rho\{z : G(z) = \lambda \} d\lambda= 0.$ \\
       For the second part  we just take the intersection of  the countable sets  $\mathcal{B}_{G, \omega_m}$ obtained for each of the continuous non negative function $G = \sup_{j\leq k < j' }M_k(F) - M_j(F).$
      \end{proof}
       We can list some properties of   
       these open sets with zero $\omega_m$ boundary measure.
       \begin{lemma}\label{L7}
        Consider $O$ an open set in $X^2$ with boundary of measure  $\omega_m$ equal to zero. Then 
        \begin{enumerate}
         \item  $\lim_N  \frac{1}{N}\sum_{n=1}^N \nu_m(\Phi^{-n}(O)) = \omega_m(O)$ and therefore 
          $\lim_N  \frac{1}{N}\sum_{n=1}^N \nu_m(\Phi^{-n}(O^c)) = \omega_m(O^c)$ 
         \item   If we denote by $R_{*}(\mathbb{1}_{O})$ the pointwise limit of the sequence $R_{\nu}(F_j)$ where $F_j$ is an increasing sequence of continuous functions converging pointwise to $\mathbb{1}_O$ then 
          $\int R_{*}(\mathbb{1}_{O})d\nu_m = \omega_m(O).$
          Alternatively if we denote by $R_{**}(\mathbb{1}_{\overline{O}})$  the pointwise limit of the sequence $ R_{\nu}(\Gamma_j)$  where $\Gamma_j$ is a decreasing sequence of continuous functions     converging pointwise to $\mathbb{1}_{\overline{O}}$ then $\int R_{**}(\mathbb{1}_{\overline{O}})d\nu_m 
          = \omega_m(\overline{O}) = \omega_m(O).$ 
          \item $R_{*}(\mathbb{1}_O) = R_{**}(\mathbb{1}_{\overline{O}}).$
         \item In particular $\omega_m(\cup_{n= -\infty}^{\infty} \Phi^n(O))  = 
          \int \sup_N R_{*}\left(\mathbb{1}_{\cup_{n= - N}^{N} \Phi^n(O)}\right) d\nu_m.$
          \item  If $A\times A $ is an open subset of $O$ then $R_{\nu}(A\times A) \leq R_{*} (\mathbb{1}_{O})$
         \end{enumerate} 
       \end{lemma}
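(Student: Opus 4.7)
The strategy is to approximate $\mathbf{1}_O$ from below by continuous functions $F_j \uparrow \mathbf{1}_O$ and $\mathbf{1}_{\overline{O}}$ from above by continuous $\Gamma_j \downarrow \mathbf{1}_{\overline{O}}$ (available by Urysohn since $X^2$ is a compact metric space), and to transfer everything via the identity $\int R_\nu(F)\,d\nu_m = \int F\,d\omega_m$ furnished by Lemma \ref{L5} for continuous $F$. For (1), the pointwise bounds $F_j\circ\Phi^n \le \mathbf{1}_{\Phi^{-n}(O)} \le \Gamma_j\circ\Phi^n$ combined with the $L^1(\nu_m)$-convergence of $M_N(F_j)$ and $M_N(\Gamma_j)$ produce the sandwich
\[
\int F_j\,d\omega_m \;\le\; \liminf_N \frac{1}{N}\sum_{n=1}^N \nu_m(\Phi^{-n}(O)) \;\le\; \limsup_N \frac{1}{N}\sum_{n=1}^N \nu_m(\Phi^{-n}(O)) \;\le\; \int \Gamma_j\,d\omega_m.
\]
Letting $j\to\infty$, both outer integrals converge to $\omega_m(O)$ because $\omega_m(\partial O)=0$; the $O^c$ statement then follows by complementation.

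For (2), positivity of $R_\nu$ on continuous functions (since $M_N(F)\ge 0$ when $F\ge 0$) makes the sequence $R_\nu(F_j)$ increase $\nu_m$-a.e., so $R_*(\mathbf{1}_O)$ is well defined as the pointwise limit, and monotone convergence gives $\int R_*(\mathbf{1}_O)\,d\nu_m = \lim_j \int F_j\,d\omega_m = \omega_m(O)$; the decreasing version yields $\int R_{**}(\mathbf{1}_{\overline{O}})\,d\nu_m = \omega_m(\overline{O}) = \omega_m(O)$. Part (3) then follows from $F_j \le \mathbf{1}_O \le \mathbf{1}_{\overline{O}} \le \Gamma_k$, which forces $R_*(\mathbf{1}_O) \le R_{**}(\mathbf{1}_{\overline{O}})$ $\nu_m$-a.e., an inequality that is an equality since the two integrals coincide.

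The key input for (4) is that $\omega_m$ is $\Phi$-invariant: $R_\nu(F\circ\Phi) = R_\nu(F)$ $\nu$-a.e.\ for continuous $F$ because the Cesaro averages of $F\circ\Phi^n$ and $F\circ\Phi^{n+1}$ differ by $O(1/N)$, hence $\int F\circ\Phi\,d\omega_m = \int F\,d\omega_m$. Since $\Phi$ is a homeomorphism, $\partial\Phi^n(O) = \Phi^n(\partial O)$, so $\omega_m(\partial U_N)=0$ for $U_N := \bigcup_{n=-N}^{N} \Phi^n(O)$, and (2) applies to each $U_N$ to give $\int R_*(\mathbf{1}_{U_N})\,d\nu_m = \omega_m(U_N)$. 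A final monotone convergence in $N$ (the sequences $R_*(\mathbf{1}_{U_N})$ and $\omega_m(U_N)$ are both increasing) yields the stated formula. Part (5) is a direct monotonicity argument: take continuous $g_j \uparrow \mathbf{1}_A$, so that the continuous functions $g_j\otimes g_j$ satisfy $g_j\otimes g_j \le \mathbf{1}_{A\times A} \le \mathbf{1}_O$; positivity of $R_\nu$ together with the definition of $R_*$ gives $R_\nu(g_j\otimes g_j) \le R_*(\mathbf{1}_O)$ $\nu_m$-a.e., and letting $j\to\infty$ yields the claim.

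The main obstacle throughout is verifying that $R_*(\mathbf{1}_O)$ and $R_{**}(\mathbf{1}_{\overline{O}})$ are genuinely well defined $\nu_m$-a.e.\ independently of the approximating sequence; this is what legitimises (3), (4), and (5), and it falls out of the integral identities in (2). The other subtle ingredient is the $\Phi$-invariance of $\omega_m$, which is read off from the fact that $R_\nu(F\circ\Phi) = R_\nu(F)$ in $L^1(\nu)$ for continuous $F$, established in Lemma \ref{L2}.
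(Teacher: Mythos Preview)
Your arguments for parts (1)--(4) are correct and coincide with the paper's: the same sandwich between $\int F_j\,d\omega_m$ and $\int\Gamma_j\,d\omega_m$, the same monotone-convergence computation for $\int R_*(\mathbf 1_O)\,d\nu_m$ and $\int R_{**}(\mathbf 1_{\overline O})\,d\nu_m$, the same deduction of (3) from the pointwise inequality $R_*\le R_{**}$ together with equality of integrals, and the same reduction of (4) to (2) applied to $U_N$. Your explicit verification that $\omega_m$ is $\Phi$-invariant (hence $\omega_m(\partial U_N)=0$) is a detail the paper leaves implicit.

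Part (5) is where the two arguments diverge. The paper does \emph{not} approximate $\mathbf 1_A$ from below; instead it tests against invariant sets via the adjoint $T^*$: for any $\Phi$-invariant $\mathcal I$,
\[
\int_{\mathcal I} R_\nu(\mathbf 1_{A\times A})\,d\nu_m
=\lim_N\int\Big(\tfrac1N\sum_{n=1}^N T^{*n}\mathbf 1_{\mathcal I}\Big)\mathbf 1_{A\times A}\,d\nu_m
\le\lim_N\int\Big(\tfrac1N\sum_{n=1}^N T^{*n}\mathbf 1_{\mathcal I}\Big)\Gamma_j\,d\nu_m
=\int_{\mathcal I}R_\nu(\Gamma_j)\,d\nu_m,
\]
and then lets $j\to\infty$ and invokes (3). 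Since $R_\nu(\mathbf 1_{A\times A})$ and $R_{**}(\mathbf 1_{\overline O})$ are both $\Phi$-invariant, the inequality on every invariant set forces the pointwise inequality. Your route, by contrast, takes continuous $g_j\uparrow\mathbf 1_A$ and argues $R_\nu(g_j\otimes g_j)\le R_*(\mathbf 1_O)$ directly by monotonicity; this is clean and avoids the adjoint.

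There is, however, a small gap in your last step. From $R_\nu(g_j\otimes g_j)\le R_*(\mathbf 1_O)$ you pass to the limit and claim $R_\nu(\mathbf 1_{A\times A})\le R_*(\mathbf 1_O)$. What the limit actually gives is $\sup_j R_\nu(g_j\otimes g_j)\le R_*(\mathbf 1_O)$, and positivity of $R_\nu$ on $\mathcal L$ only yields $\sup_j R_\nu(g_j\otimes g_j)\le R_\nu(\mathbf 1_{A\times A})$, the wrong direction. You need the equality $\sup_j R_\nu(g_j\otimes g_j)=R_\nu(\mathbf 1_{A\times A})$ $\nu$-a.e. This is true, but it is not a formal consequence of positivity alone: it follows because $R$ (hence $R_\nu$) is $L^1$-continuous on $\mathcal L$ in each factor separately, i.e.\ $\|R(g_j\otimes g_j)-R(\mathbf 1_A\otimes\mathbf 1_A)\|_{L^1(\mu)}\le\|g_j-\mathbf 1_A\|_{L^1(\mu)}(\|g_j\|_\infty+\|\mathbf 1_A\|_\infty)\to 0$, so the increasing sequence $R_\nu(g_j\otimes g_j)$ converges $\nu$-a.e.\ to $R_\nu(\mathbf 1_{A\times A})$. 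Once this is said, your argument is complete and arguably more elementary than the paper's adjoint computation; the paper's approach, on the other hand, sidesteps this continuity issue by comparing $\mathbf 1_{A\times A}$ directly with the decreasing majorants $\Gamma_j$.
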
 
       \begin{proof}
        We have 
        \begin{align*}
        & \omega_m(O) \leq \liminf_N\frac{1}{N}\sum_{n=1}^N \nu_m(\Phi^{-n}(O)) \, \, \text{, because the characteristic function of an open set is lower} \\ 
       & \text{semi-continuous )} \\
        &\leq \limsup_N\frac{1}{N} \sum_{n=1}^N \nu_m\left(\Phi^{-n}(O)\right) ) \leq \limsup_N \frac{1}{N}\sum_{n=1}^N \nu_m(\Phi^{-n}(\overline{O}))\\
         &\leq \omega_m(\overline{O}) \, \text{, (because the characteristic function of a closed set is upper semi continuous)}\\ 
         & = \omega_m(O) \, \text{, because the boundary of $O$ has} \,  \omega_m \, \text{ measure 0.} 
        \end{align*}
         This proves the first part of this lemma. \\
         For the second part we have \\
         $\int R_{*}(\mathbb{1}_{O})d\nu_m = \int \lim_j R_{\nu}(F_j) d\nu_m = \lim_j\int F_j d\omega_m \geq \omega_m(O).$  \\
         On the other hand $\mathbb{1}_{\overline{O}} $ is the decreasing pointwise limit of a sequence of continuous functions $\Gamma_j.$ Therefore we have 
         $\int R_{**}(\mathbb{1}_{\overline{O}})d\nu_m = \lim_j \int R_{\nu} (\Gamma_j) d\nu_m = \lim_j \int 
         \Gamma_j d\omega_m = \int \omega_m(\overline{O}) = \omega_m(O)$ and we get the second part of the lemma.\\
          The third part follows from the fact that $R_{*}(\mathbb{1}_O) \leq R_{**}(\mathbb{1}_{\overline{O}})$ and 
          the equalities $$\omega_m(O) = \int R_{*}(\mathbb{1}_O)d\nu_m = \int R_{**}(\mathbb{1}_{\overline{O}})d\nu_m$$ \\
          For the fourth part we just have to observe that if $O$ has boundary with $\omega_m$ measure equal to zero then for each $N$ the open sets $\cup_{n=-N}^N \Phi^n (O)$ satisfy the same property. Thus we can use part 2 to derive the following  equalities \\
        \begin{align*}    
         & \omega_m(\cup_{n= -\infty}^{\infty} \Phi^n(O)) = \lim_N \omega_m( \cup_{n=-N}^N \Phi^n (O)) \\
         &= \lim_N \int R_{*}\left(\mathbb{1}_{\cup_{n= - N}^{N} \Phi^n(O)}\right) d\nu_m =  \int \sup_N R_{*}\left(\mathbb{1}_{\cup_{n= - N}^{N} \Phi^n(O)}\right) d\nu_m.
         \end{align*}
          For the fifth part the functions $R_{\nu} (\mathbb{1}_{A\times A})$ and $R_{**}(\mathbb{1}_{\overline{O}})$ being invariant under $\Phi$ we just need to prove that for each invariant set 
          $\mathcal{I}$ we have $\int_{\mathcal{I}}R_{\nu} (\mathbb{1}_{A\times A}) d\nu_m \leq 
          \int_ {\mathcal{I}} R_{**}(\mathbb{1}_{\overline{O}}) d\nu_m.$
         We have 
        \begin{align*}
       & \int_{\mathcal{I}}R_{\nu} (\mathbb{1}_{A\times A}) d\nu_m = \lim_N \int  \mathbb{1}_{\mathcal{I}}\left( \frac{1}{N}\sum_{n=1}^N \mathbb{1}_{A\times A} (\Phi^nz)\right)d\nu_m= \lim_N\int \left(\frac{1}{N} \sum_{n=1}^N T^{*n}(\mathbb{1}_{\mathcal{I}})\right) \mathbb{1}_{A\times A} d\nu_m \\
       & \leq \lim_N  \int \left(\frac{1}{N} \sum_{n=1}^N T^{*n}(\mathbb{1}_{\mathcal{I}})\right) {\Gamma_j}d\nu_m , \text{where} \, \Gamma_j  \\
       &\text{is a decreasing sequence of continuous functions converging to} \, \mathbb{1}_{\overline{O}} \\
       & =  \int \mathbb{1}_{\mathcal{I}} R_{\nu}(\Gamma_j) d\nu_m \\
       \end{align*}
       
       Therefore we have
       $$
         \int_{\mathcal{I}}R_{\nu} (\mathbb{1}_{A\times A}) d\nu_m \leq \int \mathbb{1}_{\mathcal{I}} R_{\nu}(\Gamma_j) d\nu_m  \, \text{ for each j} $$
  and so      
  $$     \int_{\mathcal{I}}R_{\nu} (\mathbb{1}_{A\times A}) d\nu_m \leq \int \mathbb{1}_{\mathcal{I}} R_{**}(\mathbb{1}_{\overline{O}})d\nu_m = \int \mathbb{1}_{\mathcal{I}} R_{**}(\mathbb{1}_{O})d\nu_m. $$
       
         This proves that $R_{\nu} (\mathbb{1}_{A\times A})\leq  R_{**}(\mathbb{1}_{O})= R_{*}(\mathbb{1}_{O})$
              
       \end{proof}
       
        \begin{lemma}\label{L8}
            Fix $f_1, f_2$ continuous functions on $X$ and let us denote by $F$ the function $f_1\otimes f_2.$
            Assume that the sequence of functions $M_n(F)(z) $ does not converge $\nu_m$ a.e . Then there exists a sequence of integers $n_k$ satisfying the condition $n_{k+1}> n_k^8$  and a positive real number $\theta \in \mathcal{B}_{F,\omega_m}$ such that
            \begin{enumerate}
            \item $\liminf_k\left( \sup_{n_k\leq n < n_{k+1}} M_{n}(F)(z) - M_{n_k}(F)(z) \right) >\theta$
            \item $\nu_m\{z : \sup_{n_k\leq n <n_{k+1} }M_n(F)(z) - M_{n_k}(F)(z) \leq \theta \} < \frac{1}{3^k} + \frac{1}{4^k}$
            \item  for each $k$  we have $\omega_m\{z : \sup_{n_k\leq n <n_{k+1} }M_n(F)(z) - M_{n_k}(F)(z)= \theta \} = 0$
            \end{enumerate} 
            \end{lemma}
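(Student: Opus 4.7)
The plan is to select $n_k$ as a rapidly growing sub-subsequence of the Lemma~\ref{L5}(2) sequence, along which $M_{n_k}(F)$ concentrates near a value strictly below $B := \limsup_n M_n(F)$, while the suprema over each block $[n_k, n_{k+1})$ approach $B$; the resulting gap then exceeds $\theta$ outside an exceptional set whose $\nu_m$-measure is summable in $k$, and~(1) follows by Borel--Cantelli.

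First I extract the oscillation constants. Since $M_n(F)(\Phi z) = \tfrac{n+1}{n} M_{n+1}(F)(z) - \tfrac{1}{n} F(\Phi z)$, the functions $\underline{M} := \liminf_n M_n(F)$ and $\overline{M} := \limsup_n M_n(F)$ are $\Phi$-invariant, and the ergodicity of $\nu_m$ (Lemma~\ref{L4}) forces them to be $\nu_m$-a.e.\ constants $A$ and $B$; non-convergence gives $\alpha := B - A > 0$. Lemma~\ref{L5}(2) furnishes an increasing sequence $\{m_j\}$ along which $M_{m_j}(F) \to R_\nu(F)$ $\nu_m$-a.e., and by the $\Phi$-invariance of $R_\nu(F)$ (Lemma~\ref{L2}) together with ergodicity, $R_\nu(F) \equiv C$ is $\nu_m$-a.e.\ constant, $A \le C \le B$. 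In the generic case $C < B$, I fix $\theta \in (0, B - C) \cap \mathcal{B}_{F,\omega_m}$; this is possible because $\mathcal{B}_{F,\omega_m}$ has full Lebesgue measure in $(0,\infty)$ by Lemma~\ref{L6}. Condition~(3) is then automatic, since the continuous function $G_k := \sup_{n_k \le n < n_{k+1}} M_n(F) - M_{n_k}(F)$ has $\omega_m\{G_k = \theta\} = 0$ by the very definition of $\mathcal{B}_{F,\omega_m}$.

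Set $\epsilon := (B - C - \theta)/4 > 0$ and $\delta_k := \tfrac{1}{3^k} + \tfrac{1}{4^k}$. I construct $n_k$ inductively as a subsequence of $\{m_j\}$ with $n_{k+1} > n_k^8$, arranged so that for each $k$: (i) $\nu_m\{|M_{n_k}(F) - C| > \epsilon\} < \delta_k/2$, which is achievable because $M_{m_j} \to C$ $\nu_m$-a.e.\ implies convergence in $\nu_m$-measure, so the left-hand side is driven below any positive threshold by taking $n_k$ sufficiently far out in $\{m_j\}$; and (ii) $\nu_m\{\sup_{n_k \le n < n_{k+1}} M_n(F) < B - \epsilon\} < \delta_k/2$, achievable for $n_{k+1}$ large by monotone convergence, since $\sup_{n \ge n_k} M_n \ge \limsup_n M_n = B$ $\nu_m$-a.e. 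On the intersection of the two good events, whose complement has $\nu_m$-measure at most $\delta_k$, one has $M_{n_k} \le C + \epsilon$ and $\sup_{n_k \le n < n_{k+1}} M_n \ge B - \epsilon$, hence $G_k \ge B - C - 2\epsilon > \theta$. This proves~(2). Property~(1) then follows by Borel--Cantelli: the summability of $\delta_k$ places $\{G_k \le \theta\}$ in only finitely many $k$ for $\nu_m$-a.e.~$z$, and the uniform bound $G_k \ge B - C - 2\epsilon$ on the good events yields the strict inequality $\liminf_k G_k \ge B - C - 2\epsilon > \theta$.

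The main obstacle is the degenerate case $C = B$, in which the gap $B - C$ collapses and the Lemma~\ref{L5}(2) subsequence no longer supplies the concentration needed for upcrossings. In that case one must instead extract $n_k$ from ``dip'' times at which $M_{n_k}(F)$ concentrates near $A$ with $\nu_m$-probability tending to $1$; since only the a.e.\ identity $\liminf_n M_n = A$ is available, manufacturing such a deterministic subsequence is delicate and presumably relies on a refined disintegration of the $L^1(\nu)$ convergence over $\tau$, or on additional structural input from the Furstenberg--Katznelson theorem invoked in the paper's subsequent argument.
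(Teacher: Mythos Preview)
Your generic-case argument ($C<B$) is essentially the paper's own proof: both pick $n_k$ inside the Lemma~\ref{L5}(2) subsequence, use a.e.\ (hence in-measure, hence $L^1$) convergence of $M_{n_k}(F)$ to the constant $C=\int R_\nu(F)\,d\nu_m$ to make $M_{n_k}(F)$ close to $C$ outside a set of measure $\lesssim 4^{-k}$, and use the fact that the block suprema $\sup_{n_k\le n<n_{k+1}}M_n(F)$ increase to $\sup_{n\ge n_k}M_n(F)\ge B$ to force the oscillation above a fixed positive level outside a second small set. The paper phrases the second step via Egorov and the first via an $L^1$ bound plus Chebyshev, but the content is identical; your choice of $\theta\in\mathcal B_{F,\omega_m}$ below $B-C$ matches the paper's choice of $\theta<\tfrac{B-C}{2}$ in that set.

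You are also right that the case $C=B$ is not covered by this argument, and in fact the paper's own proof of the lemma begins by \emph{assuming} $\alpha-\int R_\nu(F)\,d\nu_m>0$, i.e.\ $B>C$, and never revisits the degenerate case. Where you go astray is in the proposed remedy: no refined disintegration or appeal to Furstenberg--Katznelson is needed. The resolution is the symmetry $F\mapsto -F$. If $M_n(F)$ fails to converge then $A<B$; since $A\le C\le B$, the equality $C=B$ forces $A<C$, hence for $-F$ one has $\limsup_n M_n(-F)=-A>-C=\int R_\nu(-F)\,d\nu_m$, and the generic-case argument applies verbatim to $-F$. This is precisely how the paper deploys the lemma downstream: the variational-inequality contradiction yields $\limsup_n M_n(F)=C$, and then the same contradiction applied to $-F$ yields $\liminf_n M_n(F)=C$. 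So the ``obstacle'' you identify is real at the level of the lemma as literally stated, but its dissolution is a one-line symmetry rather than additional structural input.
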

             \begin{proof}
             There exists a sequence $n_k$ which we can assume satisfies the condition $n_{k+1} >n_k^8$ for which the sequence $M_{n_k}(F)$ converges $\nu_m$ a.e. to $\int R_{\nu}(F) d\nu_m.$  Let us assume that we have 
            \begin{equation}
             \lim_k \sup_{n_k\leq n <\infty} M_n(F)(z) - M_{n_k}(F) (z)  = \alpha - \int R_{\nu}(F)  d\nu_m >0
            \end{equation}
            Notice that  $  \lim_k \sup_{n_k\leq n <\infty} M_n(F)(z) - M_{n_k}(F) (z) $ is an invariant function and by the ergodicity of $\nu_m$ it is $\nu_m$ a.e. equal to a constant. We can also observe that by the definition of the $\limsup$  we have for each $k,$ 
            $$\alpha \leq \sup_{n_k\leq n< \infty} M_{n}(F)(z).$$
             Therefore we can write 
            \begin{align*}
             &\sup_{n_k\leq n <\infty} M_n(F)(z) - \int R_{\nu}(F) d\nu_m\\
             & = \left(\sup_{n_k\leq n <\infty} M_n(F)(z) - \sup_{n_k\leq n < n_{k+1}} M_n(F)(z)\right)  + \left(\sup_{n_k\leq n < n_{k+1}} M_n(F)(z) - M_{n_k}(F)(z) \right) \\
             &+ \left(M_{n_k}(F)(z) - \int R_{\nu} (F) d\nu_m \right)\\
             &\geq \alpha - \int R_{\nu}(F) d\nu_m 
            \end{align*}
             By induction we can find a subsequence of the sequence $n_k$ that we still denote by $n_k$ such that 
             \begin{enumerate}
              \item $\int | M_{n_k}(F) - \int R_{\nu}(F) d\nu_m | d\nu_m  \leq \frac{\alpha - \int R_{\nu}(F) d\nu_m }{4^{k+1}}. $ 
             \item For each $k$  we have $$\nu_m \left\{ z': \sup_{n_k\leq n< \infty} M_n(F) (z') - \sup_{n_\leq n< n_{k+1}}M_{n}(F)(z') <  \frac{\alpha - \int R_{\nu}(F) d\nu_m}{4}  \right\} > 1- \frac{1}{3^k} \, \text{ by Egorov theorem.}$$
             \end{enumerate}
           Noticing that the first inequality gives us by Chebychev inequality the condition 
           $$\nu_m\left\{z': | M_{n_k}(F)(z') - \int R_{\nu}(F) d\nu_m | > \frac{\alpha - \int R_{\nu}(F)d\nu_m}{4}\right\}\leq \frac{1}{4^{k}} .$$
            We conclude that  we have for each $k$ 
            \begin{equation}\label{eq1L9}
            \sup_{n_k\leq n < n_{k+1}} M_n(F)(z) - M_{n_k}(F) (z)> \alpha - \int R_{\nu}(F)d\nu_m  - 2.\frac{\alpha - \int R_{\nu}(F) d\nu_m}{4} =  \frac{\alpha - \int R_{\nu}(F)  d\nu_m}{2} 
            \end{equation}
            on a set  $A_k$ with complement  $A_k^c$ with measure $\nu_m(A_k^c) < \frac{1}{3^k} + \frac{1}{4^k}.$  This proves part 2 of the lemma. The set $A =  \cap_{k=1}^{\infty} A_k$ has measure greater or equal than $1 - \sum_{k=1}^ {\infty} \left(\frac{1}{3^k} + \frac{1}{4^k} \right) = \frac{1}{6}. $
        As a consequence of (\ref{eq1L9}) we have then on this set $A$ 
        \begin{equation}\label{eq2L9}
        \liminf_k \sup_{n_k\leq n < n_{k+1}} M_n(F)(z) - M_{n_k}(F) (z)\geq  \frac{\alpha - \int R{\nu}(F) d\nu_m}{2} 
            \end{equation}
      But the measure $\nu_m$ being ergodic and the function  $ \liminf_k \sup_{n_k\leq n < n_{k+1}} M_n(F)(z) - M_{n_k}(F) (z)$ invariant,  the inequality (\ref{eq2L9}) holds then for $\nu_m$ a.e. $z$ in $X^2.$
      
      To conclude the proof and establish part three, we just need to pick $\theta>0$ less than $\frac{\alpha - \int R_{\nu}(F)  d\nu_m}{2}$
      in the set $\mathcal{B}_{F,\omega_m}$ defined in Lemma \ref{L6}. Such a value of $\theta$  automatically gives also  part 1 and 2 of the lemma.

        \subsection{A stopping time with the $\liminf_k \sup_{n_k\leq n < n_{k+1}} M_n(F)(z) - M_{n_k}(F)(z)$}
       
         \begin{lemma}\label{L9}
          Assume that $\liminf_k \sup_{n_k\leq n < n_{k+1}} M_n(F)(z) - M_{n_k}(F )(z) > \theta$  for 
          $\nu_m $ a.e. $z$. Define the function  $K: X^2 \rightarrow \N \cup \{\infty\}$ such that
          \begin{enumerate} 
           \item $K(z) = \min\{k: \sup_{n_k\leq n < n_{k+1}} M_n(F) (z) - M_{n_k}(F)(z) > \theta \}$  if the set 
           $\{k: \sup_{n_k\leq n < n_{k+1}} M_n(F) (z) - M_{n_k}(F)(z) > \theta \}$ is not empty.
           \item $K(z) = \infty $ otherwise. 
          \end{enumerate}
          Then there exists $K_0\in \N$ and an open set $O$ with positive $\nu_m$ measure such that 
          on $O$ we have $ K(z) \leq K_0.$ Furthermore  we have $\omega_m\left(\text{Bd} (O)\right) = 0.$
          \end{lemma}
          \begin{proof}
           As $K(z) $ is $\nu_m$ a.e. finite we can find a natural number $K_0$ such that the set 
           $$B = \{z : K(z) \leq K_0\}$$  has $\nu_m$ measure greater than $\frac{2}{3}.$
           The complement of the set $B$ in $X^2$ is the set of $z$ where $K(z) >  K_0.$ 
           This set is included in the closed set $ \cup_{k=1}^{K_0} \{ z: \sup_{n_k \leq n < n_{k+1}} M_n(F)(z) - M_{n_k}(F)(z) \leq \theta\}$ \\
           Its measure is lless than $\frac{5}{6}$ by part 2 of Lemma \ref{L9}. Therefore its complement,  an open set, $O$ included in 
           $B,$ has measure greater than $\frac{1}{6}.$  As each of the sets 
           $\{z: \sup_{n_k\leq n< n_{k+1}} M_n(F(z) - M_{n_k}(F)(z) > \theta\}$  also has boundary  with $\omega_m$ measure equal to zero , the set $O$ has the same property.
          
          \end{proof}
    \subsection{Proof of Theorem \ref{MET3} when the $T_i$ are such that $R(F) = \prod_{i=1}^2 \int f_id\mu$.} 
        
          For a better reading of the arguments, in this subsection we assume that the limit in norm of the averages $M_N(F)$  for $F = f_1\otimes f_2$  is the product  $\int f_1d\mu \int f_2d\mu$ for each continuous function $f_1, f_2$.  In other words the measure $\omega$ is equal to $\mu\otimes \mu.$ Such is the case for instance when the transformation $T_i$  and $T_{i}\circ T_j^{-1}$  for $i\neq j$ are weakly mixing . Another example can be given by two irrational rotations on the one dimensional torus $\T,$  $ T_{i} (x) = x + t_i,  i =1,2 $  with $t_1$ and $t_2$ linearly independent over the rationals. 
          \begin{lemma}\label{L10}
            Assume  that $R(F) = \int f_1 d\mu \int f_2 d\mu $ for each continuous function $f_1, f_2$ defined on $X$ then 
            \begin{enumerate}
            \item $\omega_m = \mu\otimes \mu $  for $\tau$ a.e. $m.$ 
            \item If $O$ is an open set in $X^2$ 
             then the  assumption $\nu_m(O) >0$ implies the condition $\omega_m(O) >0.$
            \end{enumerate}
          \end{lemma}
            \begin{proof}
              By assumption we have $R(F) = \int f_1\otimes f_2 d\mu\otimes \mu.$ As a consequence we derive from Lemma \ref{L5} the equation 
              $$\int f_1 \otimes f_2 d\omega_m =  \int \left(\int f_1\otimes f_2 d\mu\otimes \mu \right)d\nu_m 
              = \int f_1\otimes f_2 d\mu\otimes \mu. $$
              From this we get $\omega_m = \mu\otimes \mu.$ \\
              The second part of the lemma follows from the assumption made on $\mu\otimes \mu;$ for every non empty open set $\mu\otimes \mu(O)> 0.$ Therefore if $\nu_m(O) > 0$  then $O$ is not empty and 
              $\mu\otimes \mu(O) > 0.$   
              
              \end{proof}
           
\noindent{\bf End of the proof when $\omega_m= \omega,$ use of a variational inequality} 

        To end the proof in the particular case where $\omega_m = \omega$ we are going to use the following variational inequality by  H.White, see \cite {Ass6}, Theorem 1.9 for instance.  We use the following notations  $M_n(a)(j) = \frac{1}{n}\sum_{l=0}^{n} a_{l+j}.$
        
        For any sequence of natural numbers $n_k$ satisfying the condition $n_{k+1}> n_k^8$ we have  for each sequence $a_j$ of real numbers  and for each $J, K\in \N$
        \begin{equation}\label{Veq}
         \sum_{j= 1}^{J} \sum_{k=1}^K  \left|\sup_{ n_k \leq  n < n_{k+1}} M_n(a) (j) -M_{n_k}(a)(j)\right|^2 \leq C \sum_{j = 1}^{J + n_{K+ 1}} |a_j|^2 
        \end{equation}
        where the constant $C$ is independent of the sequence $a_j,$ $J$ and $K.$ \\
        First we can notice that for any $L\in \N$  we have the inequality  
        \begin{equation}
            \mathbb{1} _{O}L \sum_{k= K_0}^{K_0 + L}\sup_{n_k \leq n <n_{k+1}} M_n(F)(z) - M_{n_k}(F) (z) > \theta   
        \end{equation}
        This is because on $O$ we have $K(z) \leq K_0$ and so  
        $$\inf_{k > K(z) } \sup_{n_k \leq n <n_{k+1}} M_n(F)(z) - M_{n_k}(F) (z) \leq \inf_{ k\geq K_0}
        \sup_{n_k \leq n <n_{k+1}} M_n(F)(z) - M_{n_k}(F) (z)$$
         
         We derive that for each $j, 1\leq j\leq J $  we have 
         \begin{equation}
           \mathbb{1}_{O}(\Phi^jz)  \sum_{k= K_0}^{K_0 + L}\sup_{n_k \leq n <n_{k+1}} M_n(F)(\Phi^jz) - M_{n_k}(F) (\Phi^jz) > L\theta \mathbb{1}_O(\Phi^jz)  
         \end{equation}
          Therefore we get 
          \begin{equation}
        L \theta\sum_{j=1}^J  \mathbb{1}_O(\Phi^jz) \leq \sum_{j=1}^J  \mathbb{1}_O(\Phi^jz)\sum_{k= K_0}^{K_0 + L}\sup_{n_k \leq n <n_{k+1}} M_n(F)(\Phi^jz) - M_{n_k}(F) (\Phi^jz)     
          \end{equation}
        Applying the Cauchy Schwarz inequality we obtain
          \begin{align*}
           &L\theta\sum_{j=1}^J  \mathbb{1}_O(\Phi^jz) \\
           &\leq  \sum_{j=1}^J  \mathbb{1}_O(\Phi^jz)  \sum_{k= K_0}^{K_0 + L}\sup_{n_k \leq n <n_{k+1}} M_n(F)(\Phi^jz) - M_{n_k}(F) (\Phi^jz)  \\
           &\leq\left(\sum_{j=1}^J \mathbb{1}_O(\Phi^jz)\right)^{1/2} \left( \sum_{j=1}^J \left(\sum_{k= K_0}^{K_0 + L}\sup_{n_k \leq n <n_{k+1}} M_n(F)(\Phi^jz) - M_{n_k}(F) (\Phi^jz)\right)^2 \right)^{1/2} \\  
           & \leq \left(\sum_{j=1}^J \mathbb{1}_O(\Phi^jz)\right)^{1/2}\sqrt{L}\left(\sum_{j=1}^J \left(\sum_{k= K_0}^{K_0 + L}\left(\sup_{n_k \leq n <n_{k+1}} M_n(F)(\Phi^jz) - M_{n_k}(F) (\Phi^jz)\right)^2\right)\right)^{1/2}
          \end{align*}
        We conclude that 
        $$\sqrt{L} \theta \left(\sum_{j=1}^J \mathbb{1}_O(\Phi^jz)\right)^{1/2} \leq \left(\sum_{j=1}^J \left(\sum_{k= K_0}^{K_0 + L}\left(\sup_{n_k \leq n <n_{k+1}} M_n(F)(\Phi^jz) - M_{n_k}(F) (\Phi^jz)\right)^2\right)\right)^{1/2}$$
        
        Applying the discrete variational inequality in (\ref{Veq}) to the sequence $a_j = F(\Phi^jz)$ we obtain the inequality
        $$\sqrt{L} \theta \left(\sum_{j=1}^J \mathbb{1}_O(\Phi^jz)\right)^{1/2} \leq C \left(\sum_{j=1}^ {J + n_{K_0 +L +1}} F(\Phi^jz)^2\right)^{1/2}$$
        Squaring both sides , taking their integrals with respect to $\nu_m$ and dividing by $J$ we get 
        $$L \theta^2 \frac{1}{J}\sum_{j=1}^J \nu_m(\Phi^{-j}(O)) \leq C^2 \frac{ J + n_{K_0} +L +1}{J}
       \,   \text{as the function $F$ is bounded by one}.$$

        Then taking the $\limsup $ when $J$ goes to $\infty$ we obtain 
        $${L}\theta^2 (\mu\otimes \mu (O)) \leq C^2 .$$
            Letting $L$ go to infinity we derive a contradiction as $\mu\otimes\mu(O) > 0$  by Lemma \ref{L10} and $\theta>0.$
             This implies that we must have $\limsup_n M_n(F) (z) = \int F d\nu_m.$ 
             By changing $F$ into $-F$ the same argument would give us 
             $$-\liminf_n M_n(F)(z) = \limsup_n M_n(-F)(z)  = \int (-F) d\nu_m = -\int F d\nu_m.$$
              Thus the averages  $M_n(F)(z)$ converge  $\nu_m$ a.e.
              We can conclude by using the disintegration in Lemma  \ref{L10}
              $$\int \left( \limsup_n M_n(F)- \liminf_n M_n(F)\right) d\nu = \int \left( \limsup_n M_n(F)- \liminf_n M_n(F)\right) d\nu_m d\tau(m) = 0 .$$ 
               This means that the averages $M_n(F)$ converge $\nu$ a.e. and therefore $\mu_{\Delta}$ a.e
               as a nullset for $\nu$ is also a nullset for $\mu_{\Delta}.$  
               From this we can conclude that the averages $M_n(F)(x,x)$ converge a.e. $\mu.$ which is what we were looking for.
             
            \end{proof}
            
            
            \subsection{Proof of Theorem \ref{MET3} - General case}
            
 The proof given for the case where the limit $R (F) = \prod_{i=1}^2 \int f_1d\mu \int f_2 d\mu$ for all continuous functions $f_1$ and $f_2$ on $X$ shows one thing ; the only part in the proof where this assumption is used is part 2 of Lemma \ref{L10}. It says that if $\nu_m(O) >0 $ then 
   $\omega_m(O) > 0.$   
  To establish the same property in the general case we need  a more specific disintegration of the measure $\nu$ into ergodic measures $\nu_m.$
     \begin{lemma}\label{L11}
        Consider for $z\in \Delta$ the measure 
        $\nu_z = \frac{1}{3} \sum_{n= -\infty}^{\infty} \frac{1}{2^{|n|}}\delta _{\Phi^nz}$ where $\delta_{\Phi^nz}$ is the Dirac measure at the point $\Phi^nz.$
        Then $\nu_z$ is ergodic and for each continuous function $F$ on $X^2$ we have 
        $\int F d\nu = \int \int F(z') d\nu_z(z') d\mu_{\Delta}(z).$
        The measures $\nu_z$ satisfy the properties of the measure $\nu_m$ given in Lemma \ref{L5}
     \end{lemma}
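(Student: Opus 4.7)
The plan is to verify the three assertions in order, most of which reduce to unwinding definitions. For the disintegration formula in the second assertion, I would start from the definition of $\nu_z$ and compute, for any continuous $F$,
\begin{align*}
\int\!\!\int F(z')\,d\nu_z(z')\,d\mu_{\Delta}(z)
&= \frac{1}{3}\sum_{n=-\infty}^{\infty}\frac{1}{2^{|n|}}\int F\circ\Phi^n\,d\mu_{\Delta} = \int F\,d\nu,
\end{align*}
where the second equality is the very definition of $\nu$ from Theorem \ref{MET3} (verified first on indicators $F=\mathbf{1}_A$ and extended by linearity and monotone convergence). Note that $\tfrac{1}{3}\sum_n 2^{-|n|}=1$, so $\nu_z$ is indeed a probability measure.

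For the ergodicity of $\nu_z$, observe that $\nu_z$ is atomic and supported on the orbit $\{\Phi^n z\}_{n\in\mathbb{Z}}$. If $E\subset X^2$ is $\Phi$-invariant, then $\mathbf{1}_E\circ\Phi^n=\mathbf{1}_E$ for every $n$, so
$$\nu_z(E)=\frac{1}{3}\sum_n 2^{-|n|}\mathbf{1}_E(\Phi^n z)=\mathbf{1}_E(z)\cdot\frac{1}{3}\sum_n 2^{-|n|}=\mathbf{1}_E(z)\in\{0,1\},$$
matching the definition of ergodicity introduced before Lemma \ref{L3}.

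The substantive step is that $\nu_z$ inherits the properties of the $\nu_m$ listed in Lemma \ref{L5}. The 2-boundedness follows by reindexing,
$$\nu_z(\Phi^{-1}A)=\frac{1}{3}\sum_n\frac{\mathbf{1}_A(\Phi^{n+1}z)}{2^{|n|}}=\frac{1}{3}\sum_m\frac{\mathbf{1}_A(\Phi^m z)}{2^{|m-1|}}\leq 2\,\nu_z(A),$$
since $|m-1|\geq |m|-1$ forces $2^{-|m-1|}\leq 2\cdot 2^{-|m|}$; this places $\nu_z$ in $\mathcal{M}_2(X^2,\Phi)$ and yields the operator bounds on $T$ and $T^*$ exactly as in Lemma \ref{L5}(1). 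For parts (2) and (3) my plan is to combine the $L^1(\nu)$-norm convergence $M_N(F)\to R_{\nu}(F)$ established in Lemma \ref{L2} with the disintegration above: for each continuous $F$, Fubini gives
$$\int \|M_N(F)-R_\nu(F)\|_{L^1(\nu_z)}\,d\mu_\Delta(z) = \|M_N(F)-R_\nu(F)\|_{L^1(\nu)}\longrightarrow 0,$$
so a Borel--Cantelli-type subsequence extraction with $\|M_{n_k}(F)-R_\nu(F)\|_{L^1(\nu)}\leq 2^{-k}$ produces $\nu_z$-a.e.\ pointwise convergence for $\mu_\Delta$-a.e.\ $z$. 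Running this through a countable dense subset of $C(X^2)$ and diagonalizing yields a single subsequence $n_k$ and a $\mu_\Delta$-full set of $z$ on which $M_{n_k}(F)(z')\to R_{\nu}(F)(z')$ for $\nu_z$-a.e.\ $z'$ and every continuous $F$. The measure $\omega_z$ is then defined by Riesz representation from the positive linear functional $F\mapsto \int R_\nu(F)\,d\nu_z$, and the upper-semicontinuity bound on closed sets in part (3) follows since closed sets are decreasing limits of continuous functions. The only delicate point is the diagonal extraction, but it mirrors verbatim the one carried out in the proof of Lemma \ref{L5} for the Choquet disintegration; no new idea is required.
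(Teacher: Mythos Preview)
Your proposal is correct and follows essentially the same approach as the paper: the ergodicity argument via the orbit dichotomy, the disintegration formula by direct computation from the definition of $\nu$, and the $2$-boundedness by reindexing are all exactly what the paper does (the paper's proof is only a few lines and dismisses the last part as ``simple to check''). Your added detail for parts (2)--(3) of Lemma~\ref{L5}---using the disintegration to transfer $L^1(\nu)$-convergence to $\nu_z$-a.e.\ convergence along a summable subsequence, then diagonalizing over a countable dense set---is the natural adaptation of the Lemma~\ref{L5} argument with $\mu_\Delta$ playing the role of $\tau$, and is more explicit than what the paper writes here (the paper defers this to Lemma~\ref{L12}).
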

     \begin{proof}
         To establish the ergodicity of $\nu_z$  take an invariant set $A.$  If it contains the orbit  
         $\{ \Phi^nz : n\in \Z \}$ then its $\nu_z$  measure is one. If not it is zero. \\
         We have 
         $\int_{X^2} F(z') d\nu_z (z') = \frac{1}{3} \sum_{n=-\infty}^{\infty} \frac{1}{2^{|n|} } F( \Phi^nz) .$ \\
         Therefore we have  
         $\int \int_{X^2} F(z') d\nu_z(z') d\mu_{\Delta}(z) = \int_{X^2} F(z) d\nu(z).$
         It is simple to check that the map $\Phi$ is nonsingular with respect to $\nu_z$ as $\nu_z(\Phi^{-1}(A)) \leq 2 \nu_z(A)$ as well as the other properties of the measures $\nu_m.$

     \end{proof}
     
       \noindent{\bf Remark}
        We denote by $\nu_z$ the measure $\nu_m$ and by $\omega_z$ the corresponding measure $\omega_m$
     
     \begin{lemma}\label{L12}
     There exists a subsequence of integer $N_k$ and a set $\mathcal{Z}$ with full $\mu_{\Delta}$ measure such that for every $z\in
     \mathcal{Z}$ the following holds 
     \begin{enumerate}
     \item for every continuous function $F \in \mathcal{C}(X^2)$ 
     the averages $\frac{1}{N_k}\sum_{n=1}^{N_k} F(\Phi^nz) $ converge to $R_{\nu}(F)(z)= \int F(z') d\omega_z$ 
     \item If $A_i$ is a countable basis for the topology of $X$ then the averages 
                 $\frac{1}{N_k} \sum_{n=1}^{N_k} \mathbb{1}_{A_i \times A_i}(\Phi^nz)$ converge to $R_{\nu}(\mathbb{1}_{A_i\times A_i})(z) = \int  \left(\mathbb{1}_{A_i\times A_i}\right)(z')d\omega_z$
    \item For the same countable basis $(A_i)_i$ and for each $j\in \N$ the averages 
          $\frac{1}{N_k} \sum_{n=1}^{N_k} \mathbb{1}_{(A_i\cap A_j) \times (A_i\cap A_j)}(\Phi^nz)$ converge to \\ $R_{\nu}\left(\mathbb{1}_{(A_i\cap A_j)\times (A_i\cap A_j)}\right)(z) = \int \left(\mathbb{1}_{(A_i\cap   A_j)\times (A_i\cap A_j)}\right)(z') d\omega_z$
  \item If $ E\times E$ is one of the subsets in part 2) or part 3) above we have 
           $\mathbb{1}_{E\times E}(z) R_{\nu} (\mathbb{1}_{E\times E} ) (z) > 0.$ 
    
     \end{enumerate}
      
     \end{lemma}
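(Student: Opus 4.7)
The approach is to perform a single Cantor diagonal extraction that upgrades the $L^1(\nu)$ convergence of Lemma~\ref{L2} into pointwise convergence along the explicit disintegration of Lemma~\ref{L11}, and then to invoke the Furstenberg--Katznelson multiple recurrence theorem to secure the positivity in part~(4).

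First I would enumerate three countable families: a sequence $(F_\ell)_\ell$ uniformly dense in $\mathcal{C}(X^2)$, the indicators $\mathbb{1}_{A_i \times A_i}$, and the indicators $\mathbb{1}_{(A_i \cap A_j) \times (A_i \cap A_j)}$. Each such function belongs to the algebra $\mathcal{L}$ of Lemma~\ref{L2}, so the averages $M_N$ applied to each of them converge in $L^1(\nu)$ to the corresponding $R_\nu$. A standard diagonal argument then produces a single increasing sequence $(N_k)$ along which the convergence is $\nu$-a.e.\ for every function in the three countable lists; by uniform density together with the bound $\|M_N(F)\|_\infty \leq \|F\|_\infty$, we also obtain $\nu$-a.e.\ convergence along $(N_k)$ for every continuous $F$.

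Second, I would use the integral representation $\int F\, d\nu = \int \int F(z')\, d\nu_z(z')\, d\mu_\Delta(z)$ from Lemma~\ref{L11}. By Fubini, the $\nu$-a.e.\ convergence of $M_{N_k}(F)$ translates, for each $F$ in the countable lists, into $\nu_z$-a.e.\ convergence for $\mu_\Delta$-a.e.\ diagonal point $z$. Because the $\mathbb{Z}^2$-action is free, for $\mu_\Delta$-a.e.\ $z=(x,x)$ the $\Phi$-orbit of $z$ is injective, whence $\nu_z(\{z\}) = \tfrac{1}{3} > 0$; therefore the $\nu_z$-a.e.\ limit must be attained \emph{at} $z$ itself. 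Intersecting the exceptional sets over the countable families yields the full $\mu_\Delta$-measure set $\mathcal{Z}$ on which parts~(1), (2), (3) hold.

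Part~(4) is the part requiring new input. Take $E$ to be one of $A_i$ or $A_i \cap A_j$; if $\mu(E)=0$ the statement is vacuous, so assume $\mu(E)>0$. The Furstenberg--Katznelson multiple recurrence theorem gives
\begin{equation*}
\lim_N \frac{1}{N}\sum_{n=1}^N \mu\bigl(E \cap T_1^{-n}E \cap T_2^{-n}E\bigr) \;>\; 0.
\end{equation*}
Using the $L^1(\mu_\Delta)$ convergence of $M_N(\mathbb{1}_E \otimes \mathbb{1}_E)$ to $R(\mathbb{1}_E \otimes \mathbb{1}_E)$ supplied by Lemma~\ref{L1}, the left side equals $\int \mathbb{1}_E(x)\, R(\mathbb{1}_E \otimes \mathbb{1}_E)(x,x)\, d\mu(x)$, so $R(\mathbb{1}_E \otimes \mathbb{1}_E)$ is strictly positive on a subset of $E$ of positive $\mu$-measure. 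To promote this to positivity $\mu$-a.e.\ on $E$, I would bootstrap: if the set $E_0 := \{x \in E : R(\mathbb{1}_E \otimes \mathbb{1}_E)(x,x) = 0\}$ had positive measure, then the pointwise monotonicity $R(\mathbb{1}_{E_0}\otimes \mathbb{1}_{E_0}) \leq R(\mathbb{1}_E \otimes \mathbb{1}_E)$ (inherited from the corresponding inequality of the averages) would force $\int_{E_0} R(\mathbb{1}_{E_0}\otimes \mathbb{1}_{E_0})(x,x)\, d\mu = 0$, contradicting Furstenberg--Katznelson applied to $E_0$. Finally, because the sets $\Phi^n(\Delta)$ are disjoint (Lemma~\ref{L2}), one has $R_\nu(\mathbb{1}_{E \times E}) = R(\mathbb{1}_E \otimes \mathbb{1}_E)$ on $\Delta$, so absorbing the new countable collection of $\mu$-null sets into $\mathcal{Z}$ completes part~(4).

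The main obstacle is part~(4): Furstenberg--Katznelson supplies only an averaged positivity statement, and the monotonicity-bootstrap step above is the essential mechanism for upgrading it to pointwise positivity $\mu$-a.e.\ on each $E$. The remaining steps are standard diagonal-extraction and disintegration bookkeeping.
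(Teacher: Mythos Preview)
Your proposal is correct and follows essentially the same approach as the paper: a diagonal extraction from the $L^1(\nu)$ convergence of Lemma~\ref{L2} over the three countable families, passage from $\nu$-a.e.\ to pointwise on the diagonal via the explicit disintegration of Lemma~\ref{L11}, and for part~(4) the Furstenberg--Katznelson recurrence combined with the monotonicity $R(\mathbb{1}_W\otimes\mathbb{1}_W)\leq R(\mathbb{1}_E\otimes\mathbb{1}_E)$ for $W\subset E$ to upgrade averaged positivity to $\mu$-a.e.\ positivity on $E$. The only cosmetic difference is in the passage to the diagonal: the paper uses $\mu_\Delta\leq 3\nu$ directly and then computes $R_\nu(F)(z)=\int R_\nu(F)\,d\nu_z=\int F\,d\omega_z$ via the $\Phi$-invariance of $R_\nu(F)$, whereas you invoke the atom $\nu_z(\{z\})=\tfrac13$; the two routes are equivalent, though you should still record the one-line invariance computation to close the identification $R_\nu(F)(z)=\int F\,d\omega_z$ that the lemma asserts.
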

     \begin{proof}
     Because of the norm convergence of the averages $M_n(F)$ to $R_{\nu}(F)$ established in Lemma \ref{L2} we can extract a susbsequence $ N_k$ along which the averages 
     $\frac{1}{N}\sum_{n=1}^N F\circ \Phi^nz$  converge $\nu$  a.e. to $R_{\nu}(F)(z)$ for each function $F$ in a countable dense set of continuous functions on $X^2.$  The same conclusion follows then for each function in $\mathcal{C}(X^2)$ by approximation. 
      This eliminates a set of measure zero for $\nu$  off which the averages of continuous functions along $N_k$ converge .\\
      As the set $\mathcal{M}= \left\{ \mathbb{1}_{A_i\times A_i}, \mathbb{1}_{A_i\cap A_j \times A_i\cap A_j}, i, j \in \N\right\}$ is countable one can extract a subsequence of $N_k$ that we still denote by $N_k$ along which the averages along the subsequence $N_k$ of functions $G$ in $\mathcal{M}$  will converge $\nu$ a.e. to their limit $R_{\nu}(G)(z).$ By eliminating a null set for $\nu$ we obtain a subset of $X^2,$
 $\mathcal{Z}_1,$  on which the averages of continuous functions and of functions in $\mathcal{M}$ converge along the sequence $N_k$ to their respective limit.\\
 It remains to show that this limit is in fact modulo a nullset equal to the integral of these functions with respect to the measure $\omega_z.$
 This follows from Lemma \ref{L11}. We have 
  $$0 = \int \limsup_{k}\left| M_{N_k}(F)(z) - R_{\nu}(F)(z)\right| d\nu = \int \int  \limsup_{k}\left| M_{N_k}(F)(z') - R_{\nu}(F)(z') \right| d\nu_z(z') d\mu_{\Delta}(z)$$
 
 We conclude that we can find a set of full measure $\mathcal{Z}_2$ such that for $z\in 
 \mathcal{Z}_2$  we get 
  $$\lim_k\int \frac{1}{N_k}\sum_{n=1}^{N_k} F(\Phi^nz') d\nu_z = \int R_{\nu}(F)d\nu_z = \int F d\omega_z$$ for each $F\in \mathcal{C}(X^2).$ 
 We can compute   $\int R_{\nu}(F) d\nu_{z}$ by using the definition of the measure $\nu_z$ ( see Lemma \ref{L11})
and the invariance of $R_{\nu}(F) $ under $\Phi$ 
$$\int R_{\nu}(F) (z') d\nu_z = \frac{1}{3}\sum_{n=-\infty}^{\infty} \frac{1}{2^{|n|}}R_{\nu}(F) (\Phi^nz) =
      R_{\nu}(F)(z) .$$
  We can do similar computations for the countable set of functions in $\mathcal{M}.$ This gives us a third set of full measure $\mathcal{Z}_3$ on which the averages of $M_{N_k}(G) (z)$ converge to $\int G d\omega_z$ for each $G\in \mathcal{M}.$ \\
    The last part of this lemma is a consequence of the Furstenberg-Katznelson theorem  \cite{FK}
    They showed that if $V \in\mathcal{A}$ is a set of positive $\mu$ measure then $\int \mathbb{1}_V R(\mathbb{1}_V \times \mathbb{1}_V) d\mu > 0$ \\
    Taking a subset $W$ of $V$ with positive measure  we derive the inequality
     $$ \int \mathbb{1}_W  R(\mathbb{1}_V \times \mathbb{1}_V) d\mu \geq \int \mathbb{1}_W R(\mathbb{1}_W \times \mathbb{1}_W) d\mu > 0.$$   This means that 
      $R(\mathbb{1}_V \times \mathbb{1}_V) >0 $  on $ V.$ 
       Translated to $(X^2,\mathcal{B}(X^2), \nu)$ this inequality becomes
       $$ \mathbb{1}_{E\times E} (z) R_{\nu} ( \mathbb{1}_E \times \mathbb{1}_E )(z) >0, \, \text{ for $\nu$ \, a.e  and therefore for $\mu_{\Delta}$ a.e z} $$
  \\
      Eliminating a null set we obtain a set with full measure $\mathcal{Z}_4$ where for each element $Y$ of $\mathcal{M}$  we get 
      $$ \mathbb{1}_{Y} (z) R_{\nu} ( \mathbb{1}_{Y} )(z) >0 $$   
         
   The set we seek is just $\mathcal{Z} = \cap_{i=1}^4 \mathcal{Z}_i.$
   
      \end{proof} 
      \noindent{\bf End of the proof of the general case for H=2.} \\
        We want to show that if $z \in \mathcal{Z}$ and $\nu_{z}(O) >0$ then $\omega_z(O) > 0$
        where $O$ is an open subset of $X^2$ with boundary of measure $\omega_z$ equal to zero.
        Then we can use the steps in Lemma \ref{L7} to Lemma \ref{L9} and the section with the variational  inequality to conclude that the averages $M_N(F)(z)$ converge pointwise. \\
        So let us assume that $\nu_z(O) > 0.$ Then there exists $n\in\N$ such that $\Phi^nz \in O.$ Thus $z \in O' = \Phi^{-n}(O) $ and we have $\nu_z(O') >0. $
         As $z \in \Delta$ there exists $x\in X$ such that $z= (x, x)$ . The set $O'$ being open there exist two open  subsets $A$ and $B$ of $X$ in $\mathcal{M}$  each containing $x$ such that $A\times B\subset O'.$
          As $x$ belongs to both $A$ and $B$ we have $\nu_z\left( (A\cap B) \times (A\cap B)\right)> 0.$
          We denote by $E$ the set $A\cap B.$ \\
         By part 4 of Lemma \ref{L12} we know that $\mathbb{1}_{E\times E} (z)R_{\nu}( \mathbb{1}_{E\times E})(z) > 0 .$     
        Therefore if $\nu_z(O) >0$ then we get the inequality $\nu_z(E\times E)> 0 . $ This implies that 
        $R_{\nu}( \mathbb{1}_E \times \mathbb{1}_E) (z) > 0.$ By part 5 of Lemma \ref{L7}  this in turn implies that 
        $R_{*}(\mathbb{1}(\Phi^{-n}(O)) (z) >0 $ and  $\omega_z( \Phi^{-n}(O)) > 0.$ As $\omega_z$ is measure preserving this gives the conclusion we seek , i.e. $\omega_z( O) > 0.$  
      
                      \section{Proof of Theorem \ref{MET3} for $H>2$, Theorem \ref{MET1} and Theorem \ref{MET2}}
                      \subsection{Proof of Theorem \ref{MET3} for $H>2$}
                       The proofs of each Lemma extends without difficulty from the case $H=2$ to the case of $H>2$ commuting homeomorphisms .  The map $\Phi$ is just this time 
                       $$\Phi : X^H \rightarrow X^H \, \text{such that} \,\Phi(z) = (T_1(z_1), T_2(z_2), ..., T_H(z_H))$$
                       if $z = ( z_1, z_2, ..., z_H)$. The measure $\mu_{\Delta}$ is the diagonal measure defined by the formula $\int \mathbb{1}_{A}(x,x,...,x) d\mu = \int \mathbb{1}_A(x_1,x_2,..., x_H)d\mu_{\Delta}.$
                       for all measurable set $A\in \mathcal{A}^H.$ The only slightly  different lemma  is the analog of Lemma \ref{L1} for $H$ commuting transformations.
                       \begin{lemma}
                       For any $H$ invertible measure preserving transformation, $T_i, 1\leq i \leq H$ on the probability measure space $(X, \mathcal{A}, \mu)$ and any $H$ functions , $f_1, f_2, ...f_H$ let us denote by $R( f_1\otimes f_2 \otimes...\otimes f_H)$ the norm limit of the averages 
                       $$\frac{1}{N} \sum_{n=1}^N f_1\circ T_1^n f_2\circ T_2^n...f_H\circ T_H^n.$$
                       So there exists a measure $\omega$ on $(X^H, \mathcal{A}^H)$ defined by the formula 
                       $$\omega ( f_1\otimes f_2 \otimes...\otimes f_H) = \lim_N \int \frac{1}{N} \sum_{n=0}^{N-1} 
                       f_1(T_1^nx) ...f_H(T_H^nx) d\mu = \int R( f_1\otimes f_2 \otimes...\otimes f_H)(x_1, ...x_H) d\mu_{\Delta}.$$
                       \end{lemma}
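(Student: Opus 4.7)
The proof is the $H$-fold analogue of Lemma~\ref{L1}, obtained by replacing the appeal to the mean ergodic theorem by T. Tao's theorem on norm convergence of nonconventional ergodic averages for commuting measure preserving transformations, cited in the introduction. The plan has two substantive steps: identifying the limit, and upgrading $\omega$ from a functional on tensor products to a genuine Borel measure on $X^H$.

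\textbf{Step 1: the two equalities.} Tao's theorem provides that the averages $\frac{1}{N}\sum_{n=1}^N f_1\circ T_1^n \cdots f_H\circ T_H^n$ converge in $L^2(\mu)$, and (since they are uniformly bounded by $\prod_i \|f_i\|_\infty$) also in $L^1(\mu)$, to a function $R(f_1\otimes\cdots\otimes f_H)\in L^\infty(\mu)$. Integrating this $L^1$-convergence against the constant function $\mathbf{1}$ yields the first equality
\[
\lim_N \int \frac{1}{N}\sum_{n=0}^{N-1}\prod_{i=1}^H f_i(T_i^n x)\, d\mu(x) = \int R(f_1\otimes\cdots\otimes f_H)(x)\, d\mu(x).
\]
The defining relation $\int h(x_1,\ldots,x_H)\, d\mu_\Delta = \int h(x,\ldots,x)\, d\mu$ then produces the second equality: interpreting $R(f_1\otimes\cdots\otimes f_H)$ as the function on $X^H$ given on the diagonal by its $L^1(\mu)$-limit (the extension off the diagonal is immaterial since $\mu_\Delta$ is supported on $\Delta$), the previous integral equals $\int R(f_1\otimes\cdots\otimes f_H)\, d\mu_\Delta$.

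\textbf{Step 2: $\omega$ is a measure.} Define $\omega$ on the algebra $\mathcal{L}$ of finite linear combinations of tensor products of bounded measurable functions by the formula above and extend by linearity. The resulting functional is positive, because $f_i\geq 0$ for each $i$ makes the averages nonnegative and hence their $L^1$-limit $R$ nonnegative a.e., so $\omega(f_1\otimes\cdots\otimes f_H)=\int R\, d\mu_\Delta\geq 0$. It is bounded, with $|\omega(f_1\otimes\cdots\otimes f_H)|\leq \prod_i\|f_i\|_\infty$. Restricting to $f_i\in C(X)$, the Stone--Weierstrass theorem gives that continuous tensor products are uniformly dense in $C(X^H)$ (since $X$ is compact metric), so $\omega$ extends uniquely to a positive bounded linear functional on $C(X^H)$; the Riesz representation theorem then produces the unique Borel measure $\omega$ on $(X^H,\mathcal{A}^H)$ representing it.

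\textbf{Main obstacle.} The only delicate point is well-definedness of $\omega$ on $\mathcal{L}$: one must check that if $\sum_k c_k\, f_1^{(k)}\otimes\cdots\otimes f_H^{(k)}\equiv 0$ as a function on $X^H$, then $\sum_k c_k\, \omega(f_1^{(k)}\otimes\cdots\otimes f_H^{(k)})=0$. This is immediate once one observes that the expression vanishes on the diagonal, hence $\sum_k c_k\prod_i f_i^{(k)}(x)=0$ $\mu$-a.e. on $X$; applying the linear averaging operator and passing to the $L^1(\mu)$-limit gives $\sum_k c_k\, R(f_1^{(k)}\otimes\cdots\otimes f_H^{(k)})=0$, and integrating against $\mu_\Delta$ yields the required vanishing. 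With this in place the proof is a routine imitation of Lemma~\ref{L1}, with no further issue beyond invoking Tao's theorem.
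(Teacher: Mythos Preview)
Your proposal is correct and follows the route the paper intends: the paper does not give a separate proof of this $H>2$ lemma, it merely says the argument of Lemma~\ref{L1} carries over, with Tao's norm convergence theorem replacing the mean ergodic theorem used for $H=2$. Your Step~1 (integrate the $L^1$-limit and rewrite via $\mu_\Delta$) and Step~2 (positive linear functional on $C(X^H)$, Stone--Weierstrass, Riesz) are exactly the right way to make this precise, and in fact supply more detail than the paper does.

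One small wording issue in your ``Main obstacle'' paragraph: the fact you actually use is that $\sum_k c_k\, f_1^{(k)}\otimes\cdots\otimes f_H^{(k)}$ vanishes on \emph{all} of $X^H$, hence in particular at every point $(T_1^n x,\ldots,T_H^n x)$, so each average is identically zero and thus $\sum_k c_k R(\cdots)=0$. Phrasing this as ``vanishes on the diagonal, hence $\sum_k c_k\prod_i f_i^{(k)}(x)=0$'' is misleading, since vanishing on the diagonal alone would not control the averages at the off-diagonal points $\Phi^n(x,\ldots,x)$. The conclusion stands because your hypothesis is vanishing on the whole product space; just state it that way.
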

                       
                        \subsection{Proof of Theorem \ref{MET1}}
                     In order to apply Theorem \ref{MET3} we just need to notice that we can put ourselves in the setting of measure preserving homeomorphism by using one of B. Weiss models \cite{Weiss}. Starting with an atomless measure space $(X, \mathcal{A}, \mu)$  and $T_i$ invertible measure preserving transformations generating a free action then there exists an isomorphism allowing to assume that the maps $T_i$ satisfy exactly the assumptions of Theorem \ref{MET3}.  Theorem \ref{MET1} follows from Theorem \ref{MET3} from this isomorphism.  The pointwise convergence of the averages with respect to $\nu$ implies the same conclusion for the measure $\mu_{\Delta}.$  
                     \subsection{Proof of Theorem \ref{MET2}}
                         This time the group is generated by the powers of a single invertible measure preserving transformation $T$ on an atomless probability measure space $(X, \mathcal{A}, \mu).$ We can assume that $T$ is ergodic which makes $T$ aperiodic. Therefore the action of the group generated by this group is free. We can again use Weiss model \cite{Weiss} to put ourselves under the assumptions of Theorem \ref{MET3}. By this theorem and Lemma \ref{L9} we know that if one looks on $(X^H, \mathcal{A}^H, \mu^H)$ at the averages
                         $M_N(F)(z) = \frac{1}{N}\sum_{n=1}^N F\circ\Phi^nz$ where $F = f_1\otimes f_2...\otimes f_H$ where $|f_i|\leq 1,$ then these averages, $M_N(F)(z),$  converge $\nu$ a.e.  But because a null set for $\nu$ is automatically a nullset for $\mu_{\Delta}$ the averages $M_N(F)(z)$  converge $\mu_{\Delta}$ a.e.
                         This means that the averages $\frac{1}{N}\sum_{n=0}^{N-1} f_1(T^nx) ...f_H(T^{Hn}x)$ converge $\mu$ a.e. 
                         
\noindent{\bf Remarks.}

\begin{enumerate}
\item
  As said in the introduction once the free action case solved one can get the general commmuting case in a simple way. For instance for $H=2$ the elements generated by the maps $T_1$ and $T_2$ are in
  $$\mathcal{T} = \{T_1, T_2, T_1\circ T_2^{-1}, T_2\circ T_1^{-1}\} = \{ T_{\gamma}, \gamma \in \Gamma
  \}$$. The split of $X$ can be made with the disjoint sets  
  $$F_1 = \cup_{\gamma\in \Gamma} \cup_{n= - \infty}^{\infty} \{x ; T_{\gamma}^nx = x\} \,\, \text{and } F_2 = F_1^c.$$ It is simple to check that these two sets are invariant under $\mathcal{T}.$ The pointwise convergence of the averages $M_n(f_1\otimes f_2) $ on the set $\{x: T_1^p x =x\}$ can be obtained by first looking at the averages 
  $$\frac{1}{N}\sum_{n=1}^N f_1( T_1^{pn}x) f_2(T_2^{pn}x)= \frac{1}{N}f_1(x)\sum_{n=1}^N f_2(T^{pn}x) \, 
  \, \text{which converge by Birkhoff theorem}.$$ Then we derive the pointwise convergence of the averages 
  $$\frac{1}{N}\sum_{n=1}^N f_1(T_1^{pn +q}x) f_2(T_2^{pn +q}x) $$ by applying the previous pointwise convergence to the functions $(f_1\circ T_1^q)\otimes (f_2\circ T_2^q).$ Regrouping these results for $0\leq q < p,$ we get the convergence of the averages 
  $M_N(f_1\otimes f_2)$ by writing them as $$\frac{1}{N} \sum_{q=0}^{p-1}\sum_{n= 0}^{ [\frac{N}{p}]}  f_1(T_1^{pn +q}x) f_2(T_2^{pn +q}x). $$ 
  The same argument works for each set $\{x; T_2^qx = x\}$ for some $q.$  For a set of the form 
  $\{x : (T_1\circ T_2^{-1})^m x = x\}$ we have then $T_1^mx = T_2^mx.$ The result follows simply from the Birkhoff theorem. 
\item
   Once transferred to the space  $(X^2, \mathcal{A}^2, \nu)$ the commuting assumption of $T_1$ and $T_2$ is only used in the Furstenberg -Katznelson recurrence result. This allows to get some valuable information on the pointwise convergence of the averages $\frac{1}{N} \sum_{n=1}^N f_1(T_1^nx) f_2(T_2^nx)$ when the maps are not necessarily commuting but generate a nilpotent action.  This is studied in \cite{Ass8}.
\item
 Ideas in this paper can be used to prove that the ergodic Hilbert transform, i.e. the series 
 $\sum_{n= - \infty}^{\infty \, `} \frac{\prod_{i=1}^H f_i(T_i^n x)}{n} $ converges a.e.
 This is done in \cite{Ass7}.
 \item 
 The method we use would most likely provide some information on the pointwise convergence of averages 
 of the form $\frac{1}{N}\sum_{n=1}^N f_1(T_1^{p(n)}x)f_2(T_2^{p(n)}x) $ where $p$ is a polynomial with integers values. At the present time we do not know if it would apply directly to averages of the form $\frac{1}{N}\sum_{n=1}^N f_1(T^{n^2}x) f_2(T_2^{n}x).$
 \end{enumerate}

\end{document}